\documentclass{amsart}
\usepackage[sorted-cites]{amsrefs}
\usepackage{graphicx}
\usepackage{latexsym}
\usepackage{amsthm}

\usepackage{amsfonts}
\usepackage{xcolor}
\usepackage[all]{xy}
\usepackage{dsfont}

\usepackage{amssymb, mathrsfs, amsfonts, amsmath}
\usepackage{amsbsy}
\usepackage{amsfonts}

\newtheorem{corollary}{Corollary}

\newtheorem{lemma}{Lemma}
\newtheorem{proposition}{Proposition}
\newtheorem{remark}{Remark}
\newtheorem{theorem}{Theorem}

\numberwithin{equation}{section}

\makeatletter
\@namedef{subjclassname@2020}{%
  \textup{2020} Mathematics Subject Classification}
\makeatother

\title[Critical metrics of the volume functional]{Spectral properties for critical metrics\\ of the volume functional}

\author{Rafael Di\'ogenes}
\author{Jaciane Gonçalves}
\author{Ernani Ribeiro Jr}

		\address[R. Di\'ogenes]{UNILAB, Instituto de Ci\^encias Exatas e da Natureza, Rua Jos\'e Franco de Oliveira, s/n, 62790-970, Reden\c{c}\~ao - CE, Brazil}\email{rafaeldiogenes@unilab.edu.br}

			\address[J. Gonçalves]{Universidade Estadual do Cear\'a - UECE, Centro de Ci\^encias e Tecnologia, Itaperi, 60714-903, Fortaleza - CE, Brazil}
	\email{jaciane.goncalves@uece.br}

		\address[E. Ribeiro]{Universidade Federal do Cear\'a - UFC, Departamento  de Matem\'atica, Campus do Pici, Av. Humberto Monte, Bloco 914, 60455-760, Fortaleza - CE, Brazil}
		\email{ernani@mat.ufc.br} 
	
\thanks{R. Di\'ogenes was partially supported by CNPq/Brazil - 305731/2024-6}

	\thanks{E. Ribeiro Jr was partially supported by CNPq/Brazil - 305128/2025-6 and 351492/2025-9}

\keywords{Critical metrics; volume functional; spectral properties; eigenvalue estimates; Steklov problem}

\subjclass[2020]{Primary 53C21, 53C25, 58J50.}

\date{\today}

\begin{document}

\begin{abstract}
In this article, we investigate spectral properties of compact $V$-static manifolds, namely, compact manifolds with boundary whose metrics are critical points of the volume functional under a scalar curvature constraint. We derive sharp estimates for the first Steklov eigenvalue and the entire fourth-order Steklov spectrum. We further obtain a Lichnerowicz-type lower bound for the first eigenvalue of the drifted Laplacian naturally associated with the $V$-static potential. In the corresponding equality cases, we obtain rigidity results characterizing the Euclidean ball and the hemisphere.
\end{abstract}

\maketitle
\section{Introduction}
\label{intro}

A compact $n$-dimensional Riemannian manifold $(M^n,\,g),$ $n\geq 2,$ possibly with boundary $\partial M,$ is called a $V$-{\it static metric} if there exist a nonconstant smooth function $f$ on $M^n$ and a constant $\kappa$ satisfying the overdetermined elliptic system
\begin{equation}
\label{eqVS}
\left\{%
\begin{array}{lll}
    \displaystyle \mathcal{L}_g^{*}(f) =-(\Delta f)g + \nabla^2 f-fRic= \kappa g & \hbox{in $M,$} \\    
    \displaystyle f>0 & \hbox{on $int(M),$} \\
        \displaystyle f=0 & \hbox{on $\partial M,$} \\
\end{array}%
\right.
\end{equation} where $\mathcal{L}_g^{*}$ denotes the formal  $L^2$-adjoint of the linearization of the scalar curvature operator $\mathfrak{L}_g,$ $Ric$ is the Ricci tensor of  $(M^n,\,g),$ and $\Delta$ and $\nabla^2$ denote Laplacian and Hessian operators, respectively (cf.   \cites{corv2013def,miao2011einstein,miao2009volume}).

The study of $V$-static metrics arose from the investigation of critical metrics of the volume functional on compact manifolds with boundary. Motivated by the variational characterization of Einstein metrics as critical points of the Einstein--Hilbert functional \cite{besse2007einstein}*{Theorem 4.21}, together with the volume comparison theorem of Fan, Shi, and Tam \cites{fan2007large}, Miao and Tam \cites{miao2011einstein,miao2009volume} and Corvino, Eichmair, and Miao \cite{corv2013def} initiated the study of critical points of the volume functional restricted to the space of metrics with prescribed constant scalar curvature and fixed boundary metric. In this setting, $V$-static metrics are precisely the corresponding critical metrics; see, e.g., \cite{corv2013def}*{Theorem 2.3} and \cite{miao2009volume}*{Theorem 2.1}.

An important structural property of $V$-static manifolds is that every connected solution of \eqref{eqVS} has constant scalar curvature (cf. \cite{corv2013def}*{Proposition 2.1} and \cite{miao2009volume}*{Theorem 7}). Furthermore, 
if one considers the tensor equation \eqref{eqVS} independently of the boundary normalization, constant potentials correspond to Einstein metrics. Another distinguished subclass arises when $\kappa=0,$ in which case \eqref{eqVS} becomes the static vacuum system; see \cites{corvino2000scalar,corv2013def,HE1975}. Recently, McCormick \cite{McCormick} observed that $V$-static metrics also appear in asymptotically hyperbolic geometry as critical points of the volume-renormalized mass.
The geometric significance of $V$-static metrics was further highlighted by Corvino, Eichmair, and Miao \cite{corv2013def}, who established a deformation theorem showing that scalar curvature alone is insufficient to guarantee volume comparison. 
This result is particularly related to Schoen's conjecture on scalar curvature deformation \cite{Schoen2}; see also \cites{corv2013def,yuan}.

Explicit examples of $V$-static metrics on both compact and noncompact manifolds were constructed in \cites{corv2013def,miao2011einstein,miao2009volume}, including the spatial Schwarzschild and AdS--Schwarzschild metrics restricted to suitable domains containing the horizon and bounded by two spherically symmetric hypersurfaces, as well as geodesic balls in simply connected space forms endowed with their standard metrics. The study of $V$-static metrics has attracted considerable attention in recent years, leading to a variety of rigidity, classification, and geometric inequality results that deepen our understanding of the relationship between scalar curvature, volume, and boundary geometry; see, e.g., \cites{Maria,barros2015bach,BDR21,BDRIJM,Batistaetal2017,Allan2,corv2013def,DPRIsrael,FY,He1,KS,miao2011einstein,miao2009volume,yuan} and the references therein.

Alongside rigidity and classification results, spectral methods provide a powerful framework for understanding the geometry of Riemannian manifolds. Since spectra encode important geometric information, eigenvalue estimates have long been a fundamental topic in geometric analysis. A classical result in this direction is the Lichnerowicz estimate \cite{Lich}, which states that if a closed Riemannian manifold satisfies $Ric\geq (n-1)k \,g$ for some positive constant $k,$ then the first nonzero eigenvalue of the Laplacian operator is bigger than or equal to $n k.$ Moreover, Obata \cite{Obata} proved that equality holds if and only if the manifold is isometric to the standard sphere $\mathbb{S}^n,$ providing one of the fundamental rigidity theorems in spectral geometry. These results have stimulated numerous studies on sharp eigenvalue estimates under geometric assumptions. A significant advance was made by Bakry and \'Emery \cite{bakry}, who connected smooth metric measure spaces with functional inequalities and spectral theory; see also, e.g., \cites{BenNi,CZ2017,CZ2018}.

Despite substantial progress in rigidity and classification, the spectral properties of \(V\)-static manifolds remain comparatively less understood. Since \(V\)-static metrics arise as critical points of the volume functional, their distinguished geometry is expected to impose significant restrictions on the spectra of associated differential operators. A first indication of this interaction is obtained by pairing the traced \(V\)-static equation with a first Dirichlet eigenfunction, which yields an exact identity for the first Dirichlet eigenvalue of the Laplace operator on compact \(V\)-static manifolds with boundary (see Proposition~\ref{pthmK1}). This observation leads us to investigate further boundary spectral problems in the \(V\)-static setting, beginning with the Steklov problem.

Given a compact Riemannian manifold $(M^n,\,g)$ with boundary $\partial M$, the Steklov (or Stekloff) problem \cite{Stekloff} consists in finding non-trivial solutions of
\begin{equation}\label{stekloffproblem}
\Delta u = 0 \ \text{in } M^n, \qquad \frac{\partial u}{\partial \nu} = p\, u \ \,\,\,\,\text{on }\, \partial M,
\end{equation}
where $p\in \mathbb R.$ The Steklov eigenvalue problem has connections with
inverse problems, elasticity, fluid mechanics, heat transmission, and vibration theory;
see, e.g., \cite{GP2017,WANGXIA} and the re\-fe\-rences therein. Moreover, the Steklov eigenvalues coincide with the
spectrum of the Dirichlet-to-Neumann map. Its spectrum is discrete,
nonnegative, and unbounded, and, when $M$ is connected, can be arranged as
\[
0=p_0<p_1\leq p_2\leq \cdots \to +\infty,
\]
where the eigenvalues are repeated according to multiplicity. 

The first nonzero Steklov eigenvalue, denoted by $p_1$, carries
significant geometric information. Some sharp estimates relating $p_1$ to the geometry of the boundary $\partial M$ were obtained in, e.g., \cites{AlessandriniMagnanini1994,CGH2020,Escobar1997,Escobar1999,Escobar2000,FS2019,Kuttler1972,Payne1970,XX24}. In this context, Xia and Wang \cite{WANGXIA}, using the classical Reilly formula \cite{Reilly2}, proved that if $(M^n,g)$ has nonnegative Ricci curvature and the principal curvatures of $\partial M$ are bounded from below by a positive
constant $c$, then
\begin{equation}\label{estp_1wangxia}
    p_1 \leq \frac{\sqrt{\lambda_1^{\partial M}}}{(n-1)c}\left(\sqrt{\lambda_1^{\partial M}} + \sqrt{\lambda_1^{\partial M} - (n-1)c^2}\right),
\end{equation}
where $\lambda_1^{\partial M}$ denotes the first non-zero eigenvalue of the Laplacian on $\partial M$. Moreover, equality holds if and only if $M$ is isometric to a Euclidean ball of radius $1/c$. The proof of (\ref{estp_1wangxia}) relies on an estimate for $\lambda_1^{\partial M}$ established in \cite{CXIA}.

Inspired by the work of Xia--Wang \cite{WANGXIA}, Miao--Tam \cites{miao2011einstein,miao2009volume}, Batista et al. \cite{Batistaetal2017}, and Colbois et al. \cites{CAG2011,CGH2020}, we shall use the recent generalized Reilly's formula due to Qiu and Xia \cite{Qiu-Xia} (see also \cite{LiXia}) to establish a sharp geometric bound for the first nonzero eigenvalue $p_1$ of the Steklov problem on a compact $V$-static manifold with boundary.  More precisely, we have the following result.

\begin{theorem}
\label{thmB}
Let $(M^n, g, f),$ $n\geq 3,$ be an $n$-dimensional compact $V$-static manifold with connected boundary $\partial M,$ $\kappa\neq 0 $ and nonnegative scalar curvature. Then 
\begin{equation}
\label{ineqThmB}
p_1 \geq \frac{H}{n-1}.
\end{equation}
Moreover, equality in (\ref{ineqThmB}) holds  if and only if $(M^n,\,g)$ is isometric to a Euclidean ball of radius $1/p_1.$
\end{theorem}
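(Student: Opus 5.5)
The plan is to apply the generalized Reilly formula of Qiu and Xia \cite{Qiu-Xia} with weight $V=f$, the $V$-static potential, to a first Steklov eigenfunction $u$. Thus $\Delta u=0$ in $M$ and $u_\nu=p_1u$ on $\partial M$, where $\nu$ is the outward unit normal. In its integrated form the formula reads
\begin{align*}
\int_M f\big((\Delta u)^2-|\nabla^2u|^2\big)
={}&\int_M\big(\nabla^2 f-(\Delta f)g-f\,Ric\big)(\nabla u,\nabla u)\\
&+\int_{\partial M}f\big(\cdots\big)
+\int_{\partial M}f_\nu\big(h(\bar\nabla u,\bar\nabla u)+H u_\nu^2\big),
\end{align*}
where $\bar\nabla$ is the gradient of $\partial M$ and $h$ its second fundamental form. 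The structure of \eqref{eqVS} makes every term explicit:
\begin{itemize}
\item the interior tensor is exactly $\kappa g$;
\item all boundary terms carrying $f$ vanish because $f=0$ on $\partial M$;
\item since $\Delta u=0$, the left-hand side reduces to $-\int_M f|\nabla^2u|^2\le 0$, because $f>0$ in the interior.
\end{itemize}

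Before inserting anything, I would record the boundary geometry of $V$-static metrics. Tracing \eqref{eqVS} gives $\Delta f=-(Rf+n\kappa)/(n-1)$. Restricting \eqref{eqVS} to $\partial M$, where $f=0$, gives $\nabla^2f=-\frac{\kappa}{n-1}g$ there. Since $\nabla f=-|\nabla f|\nu$ on $\partial M$, this has three consequences:
\begin{itemize}
\item $|\nabla f|$ is a positive constant $c$ on the connected boundary (the classical Miao--Tam argument);
\item $\partial M$ is totally umbilic, $h=\frac{H}{n-1}\,g_{\partial M}$;
\item $H$ is a constant with $cH=\pm\kappa$, the sign fixed by orientation conventions. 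I would check the sign carefully at this step.
\end{itemize}
Here $\kappa\neq0$ guarantees $H\neq0$, and the hypothesis $R\ge0$ should be what forces $H>0$. I would prove that by integrating $\Delta f$ over $M$:
\[
-c\,|\partial M|=\int_M\Delta f=-\frac{1}{n-1}\int_M (Rf+n\kappa),
\]
which, with $R\ge0$ and $f>0$, pins down the sign of $\kappa$ and hence that of $H$.

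Next I would substitute $f_\nu=-c$, $h=\frac{H}{n-1}g$, $u_\nu=p_1u$, and $\int_M|\nabla u|^2=p_1\int_{\partial M}u^2$. This produces an identity of the form
\[
\int_M f|\nabla^2u|^2=\alpha\,p_1\int_{\partial M}u^2-cH\Big(p_1^2\int_{\partial M}u^2+\tfrac{1}{n-1}\int_{\partial M}|\bar\nabla u|^2\Big),
\]
with $\alpha$ a multiple of $\kappa$, hence of $cH$. I expect the main obstacle to be combining the tangential term $\int_{\partial M}|\bar\nabla u|^2$ with the rest into an inequality in which $p_1$ appears linearly against $\frac{H}{n-1}$. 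The term $\int_M f|\nabla^2 u|^2$ should not simply be dropped. Instead I would first try to improve it by a refined Kato-type or trace estimate, $|\nabla^2u|^2\ge\frac{1}{n-1}(\cdots)^2$ for harmonic $u$. If that is not enough, I would apply the same Reilly formula a second time with $u$ replaced by a suitable combination of $u$ and $f$, for instance the function $\langle\nabla f,\nabla u\rangle$ or $u f$, whose boundary traces are controlled by $c$ and $u_\nu$. The goal is to eliminate $\int_{\partial M}|\bar\nabla u|^2$ and arrive at $p_1\ge H/(n-1)$.

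For the equality case, every discarded term must vanish. In particular $\nabla^2u\equiv0$ wherever $f>0$, so $\nabla^2u=0$ on $M$ and $\nabla u$ is a nonzero parallel field. Feeding a parallel gradient back into \eqref{eqVS} and the umbilicity of $\partial M$ should force $Ric(\nabla u,\cdot)=0$ and, via $R$ constant, Ricci flatness. Then $f$ solves $\nabla^2f=-\frac{\kappa}{n-1}g$, up to the $\Delta f$ terms, so $f$ is a concave quadratic with a unique maximum point. Reilly/Obata-type rigidity then identifies $M$ with a flat domain bounded by an umbilic sphere, that is, a Euclidean ball, whose radius is $(n-1)/H=1/p_1$. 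The converse is checked directly: on the ball $B_r$ take $f=(r^2-|x|^2)/2$, for which $p_1=1/r=H/(n-1)$.
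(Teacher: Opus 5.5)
\textbf{The inequality.} Starting from the Qiu--Xia formula with weight $f$ is the right idea, and the paper does the same. However, you have misstated the formula, and the error hides the step that closes the argument.
\begin{itemize}
\item The interior tensor is $\nabla^2 f-(\Delta f)g+f\,Ric$, not $\mathcal{L}_g^*(f)$. Setting $f\equiv 1$ recovers the classical Reilly formula, which has $+\int_M Ric(\nabla u,\nabla u)$. So by \eqref{eqVS} the tensor equals $\kappa g+2f\,Ric$, not $\kappa g$.
\item With $k=0$, the only term multiplied by $\partial f/\partial\nu$ is $|\nabla_{\partial M}u|^2$. The terms $Hu_\nu^2$ and $\mathbb{II}(\nabla_{\partial M}u,\nabla_{\partial M}u)$ carry the factor $f$ and vanish on $\partial M$.
\end{itemize}

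The missing idea is how to handle $2\int_M f\,Ric(\nabla u,\nabla u)$, which has no sign when only $R\geq 0$ is assumed. For harmonic $u$, Bochner's formula gives $2f\,Ric(\nabla u,\nabla u)=f\Delta|\nabla u|^2-2f|\nabla^2u|^2$. Then apply Green's formula against $f$, using $f|_{\partial M}=0$, $\partial f/\partial\nu=-c$ and $\Delta f=-(Rf+n\kappa)/(n-1)$. This produces a boundary term $c\int_{\partial M}|\nabla u|^2$. Its tangential part cancels $-c\int_{\partial M}|\nabla_{\partial M}u|^2$ exactly, leaving $c\int_{\partial M}u_\nu^2=cp_1\int_M|\nabla u|^2$. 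With $\kappa=cH$ one gets
\[
c\Big(p_1-\frac{H}{n-1}\Big)\int_M|\nabla u|^2\,dV_g=\int_M f|\nabla^2u|^2\,dV_g+\frac{R}{n-1}\int_M f|\nabla u|^2\,dV_g\geq 0 .
\]
So the tangential term you single out as the main obstacle is an artifact of the wrong formula. The Hessian term is simply dropped, and no Kato-type refinement or second Reilly application is needed.

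This identity is where $R\geq 0$ enters, and $R\geq 0$ cannot do what you ask of it: it does not force $H>0$. Take a geodesic ball of radius $r\in(\pi/2,\pi)$ in the unit sphere with $f=\cos\rho-\cos r$. It has $R=n(n-1)>0$ but $\kappa=(n-1)\cos r<0$. Your integration of $\Delta f$ only gives
\[
\frac{n\kappa}{n-1}Vol(M)=c|\partial M|-\frac{R}{n-1}\int_M f\,dV_g,
\]
which has no definite sign. The inequality never needs the sign of $H$. Only in the equality case does $p_1>0$ force $H>0$.

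\textbf{The equality case.} Your argument has a gap here too. $\nabla^2u=0$ for one eigenfunction gives one parallel field, hence only $Ric(\nabla u,\cdot)=0$. Neither constancy of $R$ nor $R=0$ upgrades this to $Ric\equiv0$. Since the multiplicity of $p_1$ is unknown, you cannot produce $n$ independent parallel fields either. If you had $Ric\equiv 0$, then \eqref{eq2.3} with $R=0$ would give $\nabla^2 f=-\frac{\kappa}{n-1}g$ and the ball would follow, but that step is unsupported.

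The paper works on the boundary instead:
\begin{itemize}
\item From $\nabla^2u=0$, \eqref{segform} and $p_1=H/(n-1)$, one gets $\nabla^2_{\partial M}u=-u_\nu\,\mathbb{II}=-p_1^2u\,g_{\partial M}$, with $u|_{\partial M}$ nonconstant.
\item Obata's theorem on $\partial M$ (this is where $n\geq 3$ enters) makes $\partial M$ a round sphere of radius $1/p_1$.
\item Then $R=0$, \eqref{eqnhu850001a} and $R^{\partial M}=\frac{n-2}{n-1}H^2$ give $R_{nn}=0$. This means $\mathring{Ric}(\nabla\lambda,\nabla\lambda)=0$ on $\partial M$ for the Miao--Tam potential $\lambda=f/\kappa$.
\item The rigidity result of Batista et al.\ then yields the Euclidean ball.
\end{itemize}
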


As a consequence of Theorem~\ref{thmB}, and in the spirit of \cite{CAG2011}, we obtain the following sharp geometric bound for the Steklov eigenvalue $p_1,$ in which the condition $\kappa\neq 0$ is no longer required.

\begin{corollary}
    \label{corthmB}
    Let $(M^n, g, f),$ $n\geq 3,$ be an $n$-dimensional compact scalar flat $V$-static manifold with connected boundary $\partial M.$ Then 

    \begin{equation}
    \label{ineqcorthmB}
        p_1 \geq \frac{|\partial M|}{n\,Vol(M).}
    \end{equation} Moreover, equality in (\ref{ineqcorthmB}) holds if and only if $(M^n,\,g)$ is isometric to a Euclidean ball.
\end{corollary}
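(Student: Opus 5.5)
The plan is to reduce Corollary~\ref{corthmB} to Theorem~\ref{thmB} by using the scalar flat $V$-static structure to identify the boundary mean curvature, and then to apply a Minkowski-type integral identity.

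\textbf{Step 1: the case $\kappa=0$ is excluded.} Since $R=0$, tracing \eqref{eqVS} gives $-(n-1)\Delta f - fR = n\kappa$, that is,
\[
\Delta f=-\frac{n\kappa}{n-1}.
\]
If $\kappa=0$, then $f$ is harmonic and vanishes on $\partial M$, so the maximum principle forces $f\equiv 0$. This contradicts $f>0$ in the interior. Hence $\kappa\neq 0$, and Theorem~\ref{thmB} applies. Integrating $\Delta f$ over $M$ and using $\nu=-\nabla f/|\nabla f|$ on $\partial M$ shows $\kappa>0$.

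\textbf{Step 2: structure of the boundary.} Restricting \eqref{eqVS} to $\partial M$, where $f=0$, gives $\nabla^2 f=\frac{\kappa}{n-1}\,g$ there. It follows that $|\nabla f|$ is constant along $\partial M$. Moreover, $\partial M$ is totally umbilic with second fundamental form
\[
A=\frac{\kappa}{(n-1)|\nabla f|}\,g_{\partial M},
\]
and so its mean curvature $H=\frac{\kappa}{|\nabla f|}$ is a positive constant. (Here $H$ is the trace of $A$, matching the normalization in Theorem~\ref{thmB}.)

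\textbf{Step 3: computing $H$ by integration.} Integrating the traced equation gives
\[
-\frac{n\kappa}{n-1}\,Vol(M)=\int_M\Delta f=\int_{\partial M}\langle\nabla f,\nu\rangle=-|\nabla f|\,|\partial M|.
\]
Hence $|\nabla f|=\frac{n\kappa\, Vol(M)}{(n-1)|\partial M|}$, and therefore
\[
\frac{H}{n-1}=\frac{\kappa}{(n-1)|\nabla f|}=\frac{|\partial M|}{n\,Vol(M)}.
\]
Combining this with \eqref{ineqThmB} yields \eqref{ineqcorthmB}.

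\textbf{Step 4: equality.} If equality holds in \eqref{ineqcorthmB}, then equality holds in \eqref{ineqThmB}, and Theorem~\ref{thmB} gives that $M$ is a Euclidean ball. Conversely, for a ball of radius $r$ we have $p_1=1/r$, since the coordinate functions are Steklov eigenfunctions, and also $\frac{|\partial M|}{n\,Vol(M)}=1/r$. So equality is attained.

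The only delicate point is the sign and normalization bookkeeping: the orientation of $\nu$, and the convention that $H$ is the trace rather than the average of $A$. These have to match the conventions used in Theorem~\ref{thmB}.
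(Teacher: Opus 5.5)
Your argument is correct and is essentially the paper's proof. You use the maximum principle on $\Delta f=-\frac{n\kappa}{n-1}$ to get $\kappa>0$. You integrate that identity to obtain $\frac{H}{n-1}=\frac{|\partial M|}{n\,Vol(M)}$, which is the paper's \eqref{eqKeyR0}. You then invoke Theorem~\ref{thmB} for both the inequality and the equality case.

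One small slip: restricting the equation to $\partial M$ gives $\nabla^2 f=-\frac{\kappa}{n-1}g$ (cf.\ \eqref{eq2.3}), not $+\frac{\kappa}{n-1}g$. It is this sign, together with $\nu=-\nabla f/|\nabla f|$, that yields the umbilicity formula you state, and your final formula is correct (it is the paper's \eqref{segform}).
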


In this direction, and also motivated by \cites{BFG2009,WANGXIA}, it is natural to consider the fourth-order Steklov eigenvalue problem:

\begin{equation}
\label{Stek4}
\begin{cases}
\Delta^2 u=0 & \text{in } M,\\
u=0 & \text{on } \partial M,\\
\Delta u=q\,\dfrac{\partial u}{\partial\nu}
& \text{on } \partial M
\end{cases}
\end{equation} where $q\in \mathbb{R}.$ As observed in \cite{WANGXIA}, this problem has a natural connection with the theory of elasticity (cf. \cite{BFG2009}) and, as we shall see later, it is particularly well suited to the setting of $V$-static manifolds with boundary. 

This interaction leads to the following result.

\begin{theorem}
\label{thmD}
Let $(M^n,g,f)$, $n\geq 3$, be a compact $V$-static manifold with connected boundary $\partial M,$ $\kappa\neq 0$ and nonnegative scalar curvature. Let $q_1$ denote the first eigenvalue of the fourth-order Steklov problem (\ref{Stek4}). Then
\begin{equation}
\label{eq00athmD}
    q_1\geq \frac{n}{n-1}H.
\end{equation} Equality in (\ref{eq00athmD}) holds if and only if $R=0.$ Moreover, in the equality case, every first eigenfunction is a nonzero constant multiple of the $V$-static potential $f.$

\end{theorem}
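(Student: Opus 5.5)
The plan is to combine a weighted Reilly identity, with weight the $V$-static potential $f$ (the Qiu--Xia generalized Reilly formula), with the variational characterization
\[
q_1=\inf\Big\{\textstyle\int_M(\Delta u)^2\big/\int_{\partial M}u_\nu^2 \;:\; u|_{\partial M}=0,\ u_\nu\not\equiv 0\Big\}.
\]
Both the direction of the inequality and the sign bookkeeping in that identity still need to be checked.

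\textbf{Step 1: boundary geometry.} First I record the boundary geometry forced by \eqref{eqVS}. Taking the trace gives
\[
\Delta f=-\frac{Rf+n\kappa}{n-1},
\]
and $R$ is constant. On $\partial M$ we have $f=0$, so $\nabla f=-|\nabla f|\nu$. Restricting \eqref{eqVS} to tangent vectors gives $\nabla^2 f(X,Y)=-\frac{\kappa}{n-1}g(X,Y)$. Since $\nabla^2f(X,Y)=-|\nabla f|\,h(X,Y)$, the boundary is totally umbilic with $h=\frac{\kappa}{(n-1)|\nabla f|}g$. Hence $H=\kappa/|\nabla f|$, so $|\nabla f|=\kappa/H$ is constant along $\partial M$ and $\kappa$ has the sign of $H$. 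Integrating $\Delta f$ then gives
\[
\frac{n\kappa}{n-1}\,Vol(M)+\frac{R}{n-1}\int_M f=\frac{\kappa}{H}\,|\partial M|.
\]

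\textbf{Step 2: the lower bound.} Let $u$ be a first eigenfunction, so $\Delta^2u=0$ (hence $w=\Delta u$ is harmonic), $u=0$ on $\partial M$, and $\Delta u=q_1u_\nu$ there. I would apply the Qiu--Xia formula with weight $f$. Its interior curvature term is the tensor $\nabla^2f-(\Delta f)g-f\,Ric$ evaluated on $\nabla u$, which by \eqref{eqVS} equals $\kappa|\nabla u|^2$. Because $u=f=0$ on $\partial M$, the boundary terms reduce to $f_\nu u_\nu^2=-\frac{\kappa}{H}u_\nu^2$ times geometric factors. I then bound $|\nabla^2 u|^2\ge(\Delta u)^2/n$. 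The remaining terms $\int f(\Delta u)^2$ and $\kappa\int|\nabla u|^2=-\kappa\int u\Delta u$ are related to $\int(\Delta u)^2$ and $\int_{\partial M}u_\nu^2$ by Green's identities against $f$. I would use the harmonicity of $\Delta u$ together with $\Delta f=-\frac{Rf+n\kappa}{n-1}$; the $R$-terms then appear with a definite sign because $R\ge0$ and $f>0$. The target is $\frac{n-1}{n}\int_M(\Delta u)^2\ge H\int_{\partial M}u_\nu^2$, which is \eqref{eq00athmD}.

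\textbf{Step 3: equality case.} If $R=0$, test with $f$ itself. Then $\Delta f=-\frac{n\kappa}{n-1}$ and $f_\nu=-\kappa/H$, so $\Delta^2f=0$. Moreover $\Delta f=\frac{nH}{n-1}f_\nu$ on $\partial M$ exactly when $nH\,Vol(M)=(n-1)|\partial M|$, which is the integral identity of Step 1 with $R=0$. Thus $f$ is an eigenfunction with eigenvalue $\frac{n}{n-1}H$, and by Step 2 this eigenvalue equals $q_1$. Conversely, suppose equality holds. Then every inequality in Step 2 is an equality, so $\nabla^2u=\frac{\Delta u}{n}g$ and the $R$-terms vanish, which gives $R=0$ since $f>0$. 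Feeding $\nabla^2u=\frac{\Delta u}{n}g$ back into the $V$-static structure, I expect to show that $u$ is a constant multiple of $f$; this would also yield the statement about first eigenfunctions.

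\textbf{Main obstacle.} The hard part is Step 2: choosing the precise weighted Reilly identity and the combination of Green's formulas so that the indefinite terms $\kappa\int|\nabla u|^2$ and $\int f(\Delta u)^2$ cancel or take the right sign, using only $R\ge0$ and no Ricci lower bound. If the weight $f$ fails, I would instead try weight $1$ with $\nabla u$ replaced by $\nabla u-\frac{u_\nu}{f_\nu}\nabla f$, i.e.\ comparing $u$ to $f$ directly.
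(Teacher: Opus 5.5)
\textbf{There is a genuine gap: the inequality is never proved.} Your Step 2 carries the whole inequality but is left as a plan, and the ingredients you list would not close it.
\begin{itemize}
\item The tensor in the Qiu--Xia formula is $\nabla^2 f-(\Delta f)g+f\,Ric$, with a \emph{plus} sign. By \eqref{eqVS} the interior term is therefore $\kappa|\nabla u|^2+2f\,Ric(\nabla u,\nabla u)$, and its Ricci part is not controlled by $R\ge 0$.
\item Since $u=f=0$ on $\partial M$, every boundary term of that formula vanishes; the last one does so because $\nabla_{\partial M}u=0$. So the formula produces no $\int_{\partial M}u_\nu^2$ at all.
\item The lossy step $|\nabla^2u|^2\ge(\Delta u)^2/n$ cannot sit inside a sharp argument. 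By your own Step 3, when $R=0$ the potential $f$ is a first eigenfunction realizing equality. Every inequality in your chain would then be an equality for $u=f$, which forces $f\mathring{Ric}=\mathring{\nabla^2 f}=0$.
\end{itemize}
Your chain would thus show that every scalar-flat $V$-static manifold with connected boundary is Einstein, hence a Euclidean ball. That is far more than the theorem asserts, since it characterizes equality by $R=0$ alone. Your equality analysis has the same inconsistency: $\nabla^2u=\frac{\Delta u}{n}g$ with $u$ a multiple of $f$ happens only in the Einstein case.

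\textbf{The missing idea.} Pass to the harmonic function $w=\Delta u$; neither a Reilly formula nor a Hessian estimate is needed.
\begin{itemize}
\item Green's identity, with $u=0$ on $\partial M$ and $\Delta w=0$, gives $\int_M w^2=\int_{\partial M}w\,u_\nu=q_1\int_{\partial M}u_\nu^2$.
\item The boundary condition $w=q_1u_\nu$ gives $\int_{\partial M}w^2=q_1^2\int_{\partial M}u_\nu^2$. Hence $q_1\int_M w^2=\int_{\partial M}w^2$.
\item Apply Green's identity to $f$ and $w^2$, using $f=0$ and $\partial_\nu f=-c$ on $\partial M$ with $c=|\nabla f|_{\partial M}$, together with $\Delta w^2=2|\nabla w|^2$ and \eqref{eqlap}. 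This yields
\[
c\,q_1\int_M w^2=2\int_M f|\nabla w|^2+\frac{n\kappa}{n-1}\int_M w^2+\frac{R}{n-1}\int_M fw^2 .
\]
\end{itemize}
Since $\kappa/c=H$, this says $q_1=\frac{n}{n-1}H$ plus two nonnegative remainders. Equality forces $R=0$ and $w\equiv d\neq 0$. Then $u/d+\frac{n-1}{n\kappa}f$ is harmonic with zero boundary values, so $u$ is a constant multiple of $f$.

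\textbf{What is correct.} Step 1 is fine, except that constancy of $|\nabla f|$ on $\partial M$ should be cited from Miao--Tam; it does not follow from $H=\kappa/|\nabla f|$ alone. The implication $R=0\Rightarrow$ $f$ is an eigenfunction with eigenvalue $\frac{n}{n-1}H$ is also correct. There the boundary condition holds pointwise, because $\Delta f$ and $\partial_\nu f$ are both constant on $\partial M$.
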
 

In the scalar flat case, integrating \eqref{eqlap} over $M$ and using 
the divergence theorem together with \eqref{ek89011thmD}, we obtain the identity
\begin{equation}\label{eq1athmD}
    \frac{|\partial M|}{\operatorname{Vol}(M)}
    =\frac{n}{n-1}H
    =q_1.
\end{equation}
Motivated by the spectral comparisons between different boundary value 
problems established by Kuttler and Sigillito \cite{KS1968}, and by the 
related question raised in \cite[Question~4.1]{CAG2011}, we combine 
Theorem~\ref{thmD} with Corollary~\ref{corthmB} to obtain the following 
sharp comparison.

\begin{corollary}
Let $(M^n,g,f)$, $n\geq 3$, be a compact scalar-flat $V$-static manifold 
with connected boundary $\partial M$. Then
\begin{equation}
    n p_1 \geq q_1.
\end{equation}
Moreover, equality holds if and only if $(M^n,g)$ is isometric to a 
Euclidean ball.
\end{corollary}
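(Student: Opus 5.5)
The plan is to obtain the inequality directly from Theorem~\ref{thmD} and Corollary~\ref{corthmB}, both specialized to the scalar-flat case. In that case, identity \eqref{eq1athmD}, which the paper derives from the traced equation \eqref{eqlap} and the boundary relation \eqref{ek89011thmD}, gives
\[
q_1=\frac{n}{n-1}H=\frac{|\partial M|}{\operatorname{Vol}(M)}.
\]
This holds because, when $R=0$, tracing \eqref{eqVS} yields $\Delta f=-\frac{n}{n-1}\kappa$. Moreover, $|\nabla f|$ is constant on $\partial M$, since $f$ is constant there and $\nabla^2 f(\nu,\nu)$, $H$ are tied together by the equation. Integrating then gives the volume--area relation. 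I will take \eqref{eq1athmD} as already established, because the paper derives it immediately before the statement.

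One point needs care. Theorem~\ref{thmD} is stated for $\kappa\neq0$, so I first rule out $\kappa=0$ in the scalar-flat case. If $\kappa=0$ and $R=0$, then $\Delta f=0$ in $M$ with $f=0$ on $\partial M$. The maximum principle then forces $f\equiv0$, which contradicts $f>0$ in the interior. Hence $\kappa\neq0$, and Theorem~\ref{thmD} applies with equality, since $R=0$.

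Next, Corollary~\ref{corthmB} gives $p_1\ge \frac{|\partial M|}{n\operatorname{Vol}(M)}$. Multiplying by $n$ and using \eqref{eq1athmD}, we get
\[
np_1\ \ge\ \frac{|\partial M|}{\operatorname{Vol}(M)} = q_1,
\]
which is the desired inequality.

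For the equality case, suppose $np_1=q_1$. Then equality holds in \eqref{ineqcorthmB}, and by the rigidity part of Corollary~\ref{corthmB}, $(M^n,g)$ is isometric to a Euclidean ball. Conversely, let $M$ be a Euclidean ball of radius $r$. It is scalar flat and $V$-static with $f=\frac{r^2-|x|^2}{2}$ up to scaling. Its first Steklov eigenvalue is $p_1=1/r$, with the coordinate functions as eigenfunctions. Also $|\partial M|/\operatorname{Vol}(M)=n/r$, so $q_1=n/r=np_1$.

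The only real obstacle is confirming that the hypotheses of the two earlier results are met; the rest is bookkeeping. The nontrivial step is the exclusion of $\kappa=0$ via the maximum principle argument above.
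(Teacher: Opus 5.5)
Your argument is correct and is exactly the paper's: combining the identity $q_1=\frac{n}{n-1}H=|\partial M|/\operatorname{Vol}(M)$ from \eqref{eq1athmD} with Corollary~\ref{corthmB} gives $np_1\geq q_1$, and the equality case is inherited from the rigidity part of that corollary. Your maximum-principle exclusion of $\kappa=0$ is the same step the paper uses in proving Corollary~\ref{corthmB}. One small correction: your heuristic for why $|\nabla f|$ is constant on $\partial M$ is off. That constancy comes from $X|\nabla f|^2=2\nabla^2 f(\nabla f,X)=0$ for $X$ tangent to $\partial M$, not from a relation between $\nabla^2 f(\nu,\nu)$ and $H$. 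This is harmless here, since you take \eqref{eq1athmD} as given.
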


Next, as a consequence of the proof of Theorem \ref{thmD} and \cite{corv2013def}*{Proposition 2.5}, we have the following corollary.
\begin{corollary}
    \label{corthm3_a}
    Let $(M^n,g,f)$, $n\geq 3$, be a compact scalar flat $V$-static manifold with connected boundary $\partial M.$ Then 
    \begin{equation}
\label{ineqThmD}
q_1^2
\leq
\frac{n^2}{(n-1)(n-2)|\partial M|}
\int_{\partial M}R^{\partial M}\,dS_g,
\end{equation}
where $R^{\partial M}$ denotes the scalar curvature of the induced
metric on $\partial M$. Equality in \eqref{ineqThmD} holds if and only
if $(M^n,g)$ is isometric to a Euclidean ball.

\end{corollary}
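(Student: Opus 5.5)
We must prove Corollary \ref{corthm3_a}. In the scalar-flat case, equation \eqref{eq1athmD} gives $q_1 = \frac{n}{n-1}H$, where $H$ is the mean curvature of $\partial M$. On a $V$-static manifold the boundary is umbilic with constant mean curvature; this comes from restricting \eqref{eqVS} to $\partial M$, where $f=0$ and $\nabla f = -|\nabla f|\nu$ with $|\nabla f|$ constant on $\partial M$. Hence
\[
q_1^2=\frac{n^2}{(n-1)^2}H^2 ,
\]
and the claimed inequality is equivalent to
\[
(n-2)H^2|\partial M|\le (n-1)\int_{\partial M}R^{\partial M}\,dS_g .
\]

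To prove this I would use the Gauss equation on $\partial M$:
\[
R^{\partial M}=R-2Ric(\nu,\nu)+H^2-|A|^2 .
\]
Since $\partial M$ is umbilic, $|A|^2=H^2/(n-1)$, so $H^2-|A|^2=\frac{n-2}{n-1}H^2$. With $R=0$ this gives
\[
\int_{\partial M}R^{\partial M}=\frac{n-2}{n-1}H^2|\partial M|-2\int_{\partial M}Ric(\nu,\nu).
\]
The inequality therefore reduces to $\int_{\partial M}Ric(\nu,\nu)\,dS_g\le 0$. This is exactly what \cite{corv2013def}*{Proposition 2.5} is cited for. In the scalar-flat case I would derive it directly as follows.

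First, the trace of \eqref{eqVS} with $R=0$ gives $\Delta f=-\frac{n\kappa}{n-1}$, and then \eqref{eqVS} reads $\nabla^2 f=fRic-\frac{\kappa}{n-1}g$. Integrating by parts,
\[
\int_M f|Ric|^2=\int_M\langle \nabla^2 f,Ric\rangle=\int_{\partial M}Ric(\nabla f,\nu)=-|\nabla f|\big|_{\partial M}\int_{\partial M}Ric(\nu,\nu).
\]
This uses $\operatorname{div}Ric=\frac12\nabla R=0$, $\langle g,Ric\rangle=R=0$, and $\nabla f=-|\nabla f|\nu$ on $\partial M$. Since $f>0$ in the interior and $|\nabla f|>0$ on $\partial M$, which follows from the Hopf lemma because $\Delta f$ has a constant sign when $\kappa\ne0$, we get $\int_{\partial M}Ric(\nu,\nu)\le 0$. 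If $\kappa=0$ one must instead rely on the cited proposition, or on the nondegeneracy of $|\nabla f|$ on $\partial M$ coming from the $V$-static structure, where $|\nabla f|$ is constant and nonzero.

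For the equality case: equality forces $\int_M f|Ric|^2=0$, hence $Ric\equiv0$. Then $\nabla^2 f=-\frac{\kappa}{n-1}g$ with $f=0$ on the connected boundary, which is Reilly/Obata-type data. A standard argument then gives that $M$ is a Euclidean ball, for instance by noting that $f$ is a quadratic function on a flat manifold whose level sets are round spheres, or by invoking the rigidity in Corollary \ref{corthmB}. Conversely, on the Euclidean ball of radius $r$ we have $R^{\partial M}=(n-1)(n-2)/r^2$ and $q_1=n/r$, so equality holds by direct computation.

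The main obstacle is justifying $|\nabla f|\ne0$ on $\partial M$ in general, together with the sign bookkeeping in the boundary integration. I would first rely directly on \cite{corv2013def}*{Proposition 2.5} for these points.
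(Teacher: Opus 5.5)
Your argument is correct, and it is more self-contained than the paper's proof. The paper argues by citation. It notes $\kappa>0$, rescales to $\lambda=f/\kappa$, and quotes Proposition 2.5 of Corvino--Eichmair--Miao in the form $\frac{|\partial M|}{Vol(M)}\le \frac{n}{\sqrt{(n-1)(n-2)}}\bigl(\frac{1}{|\partial M|}\int_{\partial M}R^{\partial M}\,dS_g\bigr)^{1/2}$. It then combines this with the identity $\frac{|\partial M|}{Vol(M)}=\frac{n}{n-1}H=q_1$, and the equality case is inherited from the cited rigidity statement.

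You bypass the volume ratio and prove the underlying inequality directly. The identity $\int_M f|Ric|^2\,dV_g=-c\int_{\partial M}Ric(\nu,\nu)\,dS_g$ (from $\nabla^2 f=fRic-\frac{\kappa}{n-1}g$, $\operatorname{div}Ric=0$ and $R=0$), together with the traced Gauss equation \eqref{eqnhu850001a}, gives the exact deficit formula
\[
\int_{\partial M}R^{\partial M}\,dS_g-\frac{n-2}{n-1}H^2|\partial M|=\frac{2}{c}\int_M f|Ric|^2\,dV_g .
\]
This is essentially the content of the cited proposition, and your signs check out. The gain is transparency: both the inequality and the fact that equality forces $Ric\equiv 0$ are visible at once. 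The cost is that you must supply the rigidity step yourself, which the paper gets for free from the citation.

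\textbf{Three small corrections.}

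First, the case $\kappa=0$ you worry about cannot occur. With $R=0$ you have $\Delta f=-\frac{n\kappa}{n-1}$. If $\kappa\le 0$, then $f$ is subharmonic with zero boundary values, so $f\le 0$, contradicting $f>0$ in the interior. Hence $\kappa>0$ and $\Delta f<0$. Hopf's lemma, or the background fact from Miao--Tam recalled in Section 2, then gives $c=|\nabla f|_{\partial M}>0$. So the ``main obstacle'' you name is not an obstacle.

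Second, in the equality case $Ric\equiv 0$ does not by itself give flatness when $n\ge 4$. Flatness and the ball structure come from the Hessian equation $\nabla^2 f=-\frac{\kappa}{n-1}g$ with $f|_{\partial M}=0$. One route is Reilly's theorem. Another is the polar-coordinate argument around the interior maximum point of $f$: there $f=f_{\max}-\frac{\kappa}{2(n-1)}r^2$, which forces $g=dr^2+r^2g_{\mathbb{S}^{n-1}}$. A third is Proposition 1 of Batista et al., as in the proof of Theorem \ref{thmB}, since $\mathring{Ric}\equiv 0$ trivially.

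Third, your alternative of invoking Corollary \ref{corthmB} is circular, because at that point you do not know $p_1=\frac{H}{n-1}$. Drop it.
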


An advantage of our approach is that it is not restricted to the
first eigenvalue. By combining a Riemannian version of Fichera's duality
principle (Lemma~\ref{lem:Fichera}) with the scalar flat $V$-static equation,
we obtain an exact relation between the entire fourth-order Steklov spectrum
and a weighted variational spectrum on the space of harmonic functions. In this sense, we denote by $\mathcal H(M)$ the completion of $\left\{
h\in C^\infty(\overline M):\Delta h=0 \text{ in }M
\right\}$ with respect to the $L^2(\partial M)$-norm,
and we let $E$ range over the $k$-dimensional linear
subspaces of $\mathcal H(M)$. Thus, we have the following result. 

%In the scalar flat $V$-static setting, the weighted energy \[ \int_M f|\nabla h|^2\,dV_g \] is understood by continuous extension to $\mathcal H(M)$; see Section~2.2. 

\begin{theorem}
\label{ThmD_k}
Let $(M^n,\,g,\,f)$, $n\geq 3$, be a compact scalar flat $V$-static
manifold with connected boundary $\partial M$. Let $q_k$ denote the $k$-th eigenvalue of the fourth-order Steklov problem (\ref{Stek4}). Then, for every
$k\geq1,$ 
\begin{equation}
\label{eq:qk-gammak}
q_k
=
\frac{|\partial M|}{Vol(M)}
+
\frac{2}{|\nabla f|_{\partial M}}\,\left(\inf_{\substack{E\subset\mathcal H(M)\\ \dim E=k}}
\;
\sup_{0\neq h\in E}
\frac{\displaystyle\int_M f|\nabla h|^2\,dV_g}
{\displaystyle\int_M h^2\,dV_g}\right).
\end{equation}
\end{theorem}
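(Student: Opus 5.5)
The plan is to combine Fichera's duality principle (Lemma~\ref{lem:Fichera}) with a weighted integration by parts that uses the scalar flat $V$-static equation. First I record the structural facts in the scalar flat case. Tracing \eqref{eqVS} with $R=0$ gives $\Delta f=-\frac{n\kappa}{n-1}$, so $\nabla^2 f=fRic-\frac{\kappa}{n-1}g$. Since $f=0$ on $\partial M$, one has $|\nabla f|$ constant along $\partial M$, $\nu=-\nabla f/|\nabla f|$, and $\partial f/\partial\nu=-|\nabla f|_{\partial M}$. Integrating $\Delta f$ gives
\[
|\nabla f|_{\partial M}\,|\partial M|=\frac{n\kappa}{n-1}Vol(M).
\]

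Fichera's principle characterizes $q_k$ through harmonic functions. For $u$ with $u=0$ on $\partial M$ and $h$ harmonic, Green's formula gives $\int_M h\Delta u=\int_{\partial M}h\,\partial_\nu u$. From this one gets the dual Rayleigh quotient
\[
q_k=\inf_{E\subset\mathcal H(M),\,\dim E=k}\ \sup_{0\neq h\in E}\frac{\int_{\partial M}h^2\,dS_g}{\int_M h^2\,dV_g}.
\]
I take this as the content of Lemma~\ref{lem:Fichera}.

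The key step is to rewrite the numerator for harmonic $h$ using $f$. Compute
\[
\mathrm{div}(h^2\nabla f)=2h\langle\nabla h,\nabla f\rangle+h^2\Delta f,
\]
and also $\Delta(h^2)=2|\nabla h|^2$. I use
\[
\int_M f\Delta(h^2)-h^2\Delta f=\int_{\partial M}f\,\partial_\nu(h^2)-h^2\,\partial_\nu f .
\]
Since $f=0$ on $\partial M$, this becomes
\[
2\int_M f|\nabla h|^2+\frac{n\kappa}{n-1}\int_M h^2=|\nabla f|_{\partial M}\int_{\partial M}h^2 .
\]
Dividing by $|\nabla f|_{\partial M}\int_M h^2$ and using the relation above, namely $\frac{n\kappa}{(n-1)|\nabla f|_{\partial M}}=\frac{|\partial M|}{Vol(M)}$, gives
\[
\frac{\int_{\partial M}h^2}{\int_M h^2}=\frac{|\partial M|}{Vol(M)}+\frac{2}{|\nabla f|_{\partial M}}\,\frac{\int_M f|\nabla h|^2}{\int_M h^2}.
\]
Substituting this into the min-max formula yields \eqref{eq:qk-gammak}, because adding a positive constant and multiplying by a positive constant commute with $\inf\sup$.

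The main obstacle is functional-analytic. The identity is proved for smooth harmonic $h$, but $\mathcal H(M)$ is an $L^2(\partial M)$-completion. So I must show two things:
\begin{enumerate}
\item $\int_M h^2$ and $\int_M f|\nabla h|^2$ extend continuously to $\mathcal H(M)$.
\item The min-max over smooth subspaces equals the one over the completion.
\end{enumerate}
The identity itself gives $\int_M f|\nabla h|^2\le C\|h\|^2_{L^2(\partial M)}$ on smooth harmonics. Combined with $\int_M h^2\le C'\|h\|^2_{L^2(\partial M)}$ (a standard estimate for harmonic extensions, e.g. via the very weak solution theory or the identity with $h^2$ bounded as above), both quadratic forms extend by continuity. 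The identity then holds on all of $\mathcal H(M)$, and density of smooth harmonics makes the two inf-sup values agree.
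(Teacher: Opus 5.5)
Your proposal is correct and follows the paper's proof essentially step by step. Fichera duality together with Courant--Fischer gives the min--max for $\int_{\partial M}h^2/\int_M h^2$ over $\mathcal H(M)$; Green's identity applied to $f$ and $h^2$ shifts the Rayleigh quotient; and the integrated trace equation $\frac{n\kappa}{(n-1)|\nabla f|_{\partial M}}=\frac{|\partial M|}{Vol(M)}$ identifies the constant. Your treatment of the extension to the completion is slightly more explicit than the paper's one-line density remark: you use the identity itself, with $\kappa>0$, to bound both $\int_M h^2$ and $\int_M f|\nabla h|^2$ by $\|h\|^2_{L^2(\partial M)}$.
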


We now turn to the drifted Laplacian naturally associated with the $V$-static potential. Since $f>0$ in $int(M)$ and $f=0$ on $\partial M,$ the function $h=-\ln f $ is defined in $int(M).$ Thus, the corresponding drifted Laplacian can be written in divergence form as
\[
L_f w:=\Delta_{-\ln f}w
=
\Delta w+\frac{1}{f}\langle \nabla f,\nabla w\rangle
=
\frac{1}{f}\operatorname{div}(f \nabla w)
\]
in $int(M)$. So, the natural
measure associated with $L_f$ is  $e^{-h}dV_g = f \, dV_g.$ Accordingly, instead of imposing a Dirichlet boundary condition, we
consider the natural weighted spectral problem associated with the
quadratic form
\[
\mathcal{Q}_f(w)
=
\int_M f|\nabla w|^2\,dV_g.
\]
Since constant functions belong to the kernel of $L_f$,
we denote by $\mu_1$ the first nonzero weighted eigenvalue, characterized
variationally by
\[
\mu_1
=
\inf_{\substack{w\not\equiv 0\\
\int_M wf\,dV_g=0}}
\frac{\displaystyle\int_M f|\nabla w|^2\,dV_g}
{\displaystyle\int_M fw^2\,dV_g}.
\]
Equivalently, an eigenfunction associated with $\mu_1$ satisfies, in the
weak sense,
\[
\operatorname{div}(f\nabla w)
=
-\mu_1fw\,\,\,\,\,\,\hbox{and}\,\,\,\,\,
\int_M wf\,dV_g=0.
\]
Moreover, since $f=0$ on $\partial M$, the boundary term arising from
integration by parts vanishes naturally, so that no additional boundary
condition is imposed in this weighted formulation.

Based on this discussion, we have established the following sharp Lichnerowicz-type estimate for the first nonzero eigenvalue $\mu_1$ of the drifted Laplacian $L_f.$

\begin{theorem}\label{thmC}
Let $(M^n,g,f)$ be an $n$-dimensional compact $V$-static manifold with
connected boundary $\partial M$ and $\kappa\geq 0$. Then 
\begin{equation}\label{thmC-estimate}
\mu_1
\geq
\frac{n+1}{n(n-1)}
\left(
R+\frac{\kappa}{f_{\max}}
\right).
\end{equation}
Moreover, equality in \eqref{thmC-estimate} holds if and only if
$\kappa=0$ and $(M^n,\,g)$ is isometric to a hemisphere $\mathbb{S}^{n}_{+}
\left(
\sqrt{n(n-1)/R}
\right).$
\end{theorem}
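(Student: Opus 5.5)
The plan is to read $L_f$ as the drifted Laplacian $\Delta_h$ with $h=-\ln f$, and to show that the $V$-static equation makes $(int(M),g,f\,dV_g)$ satisfy a curvature-dimension condition $CD(K,N)$ with $N=n+1$. The estimate should then follow from the Bakry--\'Emery version of Lichnerowicz's argument, $\mu_1\geq \frac{N}{N-1}K$.

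\emph{Step 1 (curvature).} Tracing \eqref{eqVS} gives $\Delta f=-\frac{Rf+n\kappa}{n-1}$. Substituting back yields
\[
\nabla^2 f=f\,Ric-\frac{Rf+\kappa}{n-1}\,g .
\]
For $h=-\ln f$ we have $\nabla^2 h=-\frac{\nabla^2 f}{f}+\frac{df\otimes df}{f^2}$, so the $N$-Bakry--\'Emery tensor with $N=n+1$ is
\[
Ric_h^{N}=Ric+\nabla^2h-\frac{dh\otimes dh}{N-n}=Ric-\frac{\nabla^2 f}{f}=\frac{1}{n-1}\Big(R+\frac{\kappa}{f}\Big)g .
\]
Since $\kappa\geq0$ and $0<f\leq f_{\max}$ in $int(M)$, this gives $Ric_h^{N}\geq Kg$ with $K=\frac{1}{n-1}\big(R+\frac{\kappa}{f_{\max}}\big)$.

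\emph{Step 2 (weighted Bochner).} Let $w$ be a first eigenfunction, so $L_fw=-\mu_1 w$ and $\int_M wf\,dV_g=0$. The weighted Bochner formula reads
\[
\tfrac12 L_f|\nabla w|^2=|\nabla^2w|^2+\langle\nabla w,\nabla L_fw\rangle+Ric_h(\nabla w,\nabla w).
\]
We combine it with the pointwise inequality
\[
|\nabla^2 w|^2+\frac{\langle\nabla h,\nabla w\rangle^2}{N-n}\geq\frac{(\Delta w)^2}{n}+\frac{\langle\nabla h,\nabla w\rangle^2}{N-n}\geq \frac{(L_fw)^2}{N},
\]
which follows from Cauchy--Schwarz in two steps. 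We then integrate against $f\,dV_g$. Because $f\,L_f(\cdot)=\operatorname{div}(f\nabla\,\cdot)$ and $f=0$ on $\partial M$, the left side integrates to zero. This leaves
\[
0\geq \Big(\frac{1}{N}-1\Big)\mu_1^2\int_M fw^2+\mu_1\int_M f|\nabla w|^2\cdot 0+\ldots,
\]
and more precisely, after using $\int f|\nabla w|^2=\mu_1\int fw^2$, we get $0\geq\big(\frac{\mu_1}{N}-\mu_1+K\big)\mu_1\int fw^2$. Hence $\mu_1\geq\frac{N}{N-1}K=\frac{n+1}{n}K$, which is \eqref{thmC-estimate}.

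\emph{Main obstacle (regularity at $\partial M$).} The hard step is justifying these integrations by parts, since $L_f$ degenerates at $\partial M$. I would integrate over $\{f\geq\varepsilon\}$ and let $\varepsilon\to0$. Because $f=0$ on $\partial M$ and $|\nabla f|$ is a nonzero constant there (\cite{miao2009volume}), $f$ is comparable to the distance to the boundary near $\partial M$. The boundary terms are of the form $\int_{\{f=\varepsilon\}}f\,\partial_\nu|\nabla w|^2$ and $\int_{\{f=\varepsilon\}} f\,w\,\partial_\nu w$. To show they vanish in the limit, I need $w\in C^2$ up to the boundary, or at least $f|\nabla w||\nabla^2w|\to0$ there. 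I would try to get this from regularity theory for the degenerate operator $\operatorname{div}(f\nabla\cdot)$ with $f\sim d(\cdot,\partial M)$, which is of Baouendi--Grushin/weighted Sobolev type with weight exponent $1$. This step may require an approximation argument.

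\emph{Equality.} If equality holds, every inequality above is an equality. First, $\kappa/f=\kappa/f_{\max}$ wherever $\nabla w\neq0$; since $f$ is nonconstant, this forces $\kappa=0$. Second, $\nabla^2w=\frac{\Delta w}{n}g$, and $\langle\nabla h,\nabla w\rangle$ is proportional to $L_fw$. Using $\kappa=0$, $\mu_1=\frac{n+1}{n(n-1)}R$, and $\Delta w=-\mu_1w-\langle\nabla\ln f,\nabla w\rangle$, I expect to derive $\nabla^2w=-\frac{R}{n(n-1)}w\,g$ (note $R>0$, since otherwise $\mu_1=0$). Together with the static equation for $f$, an Obata-type theorem with boundary then gives the hemisphere $\mathbb S^n_+\big(\sqrt{n(n-1)/R}\big)$; alternatively, one can use a Reilly-formula argument showing that $\partial M$ is totally geodesic. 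Conversely, on the hemisphere with $f=x_{n+1}$, a coordinate function $x_1$ is an eigenfunction achieving equality.
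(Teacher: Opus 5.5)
\textbf{The inequality.} Your proof of the inequality is the paper's argument. Lemma~\ref{lemB} is exactly your $N=n+1$ Bochner inequality, obtained there from Lemma~\ref{lemma-tec} with $\alpha=1/n$. The paper also integrates against $f\,dV_g$ over $\{f\geq\varepsilon\}$, lets $\varepsilon\to0$, and then uses $\int_M f|\nabla w|^2=\mu_1\int_M fw^2$. The only difference is cosmetic. You replace $\kappa/f$ by $\kappa/f_{\max}$ pointwise inside $Ric_h^{n+1}$, whereas the paper keeps the term $\frac{\kappa}{n-1}\int_M|\nabla w|^2$ and afterwards bounds it from below by $\frac{\kappa\mu_1}{(n-1)f_{\max}}\int_M fw^2$. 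On boundary regularity you are at least as careful as the paper, which simply asserts that $\nabla^2w$ stays bounded near $\partial M$.

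\textbf{The gap: the equality case.} Knowing $\nabla^2w=-\frac{R}{n(n-1)}w\,g$ is not enough to invoke an Obata-type theorem on a manifold with boundary. Such results, Reilly's or its Neumann analogue, need a boundary condition on $w$, and you never produce one.

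Your fallback does not help. A Reilly-formula argument that $\partial M$ is totally geodesic adds nothing, since for $\kappa=0$ this is already immediate from \eqref{segform}. It also does not constrain $w$ on $\partial M$. For example, on a hemisphere the function $\cos\theta\,x_1+\sin\theta\,x_{n+1}$ solves the Obata equation but has nonzero constant normal derivative.

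The missing step is to use the equality case of your Cauchy--Schwarz step, which you state but never exploit. That equality gives
\[
\frac{1}{f}\langle\nabla f,\nabla w\rangle=\frac{1}{n+1}L_fw=-\frac{\mu_1}{n+1}w,
\qquad\text{that is,}\qquad
\langle\nabla f,\nabla w\rangle=-\frac{\mu_1}{n+1}fw \ \text{ in } int(M).
\]
Pass to $\partial M$ using the same $C^1$ regularity you already need for the inequality. There $f=0$ and $\nabla f=-c\nu$ with $c=|\nabla f|_{\partial M}>0$, so you obtain $\frac{\partial w}{\partial\nu}=0$. Then $\nabla^2w+\frac{\mu_1}{n+1}wg=0$ with Neumann data is exactly the boundary Obata situation the paper uses to conclude that $(M^n,g)$ is the hemisphere $\mathbb{S}^n_+\big(\sqrt{n(n-1)/R}\big)$.
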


As a consequence of Theorem \ref{thmC}, we obtain the following corollary.

\begin{corollary}\label{cor-static-weighted}
Let $(M^n,\,g,\,f)$ be an $n$-dimensional compact static manifold with
connected boundary $\partial M.$ Then 
\begin{equation}\label{cor-static-weighted-eq}
\mu_1
\geq
\frac{n+1}{n}\lambda_1.
\end{equation}
Moreover, equality in \eqref{cor-static-weighted-eq} holds if and only if
$(M^n,g)$ is isometric to a hemisphere $\mathbb{S}^{n}_{+}
\left(
\sqrt{n/\lambda_1}
\right).$
\end{corollary}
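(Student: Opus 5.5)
The plan is to read this off directly from Theorem~\ref{thmC} with $\kappa=0$. The only extra input is an identification of $\lambda_1$, the first Dirichlet eigenvalue of the Laplacian, with $R/(n-1)$ in the static setting, in the spirit of Proposition~\ref{pthmK1}.

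\textbf{Step 1: identify $\lambda_1$.} By a compact static manifold I mean a solution of \eqref{eqVS} with $\kappa=0$. Taking the trace of \eqref{eqVS} gives
\[
-(n-1)\Delta f - Rf = n\kappa = 0,
\qquad\text{that is,}\qquad
\Delta f = -\frac{R}{n-1}\,f .
\]
Here $R$ is constant, by the structural result quoted in the introduction (\cite{corv2013def}*{Proposition 2.1}). Since $f>0$ in $int(M)$ and $f=0$ on $\partial M$, $f$ is a positive Dirichlet eigenfunction. A positive Dirichlet eigenfunction must be a first eigenfunction: otherwise it would be $L^2$-orthogonal to the positive first eigenfunction, which is impossible. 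Hence
\[
\lambda_1=\frac{R}{n-1}.
\]
In particular $R>0$, because $\lambda_1>0$. Alternatively, pair the traced equation with $f$ and integrate by parts, using $f|_{\partial M}=0$, to get $(n-1)\int_M|\nabla f|^2\,dV_g=R\int_M f^2\,dV_g$, which is positive since $f$ is nonconstant.

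\textbf{Step 2: the inequality.} Apply Theorem~\ref{thmC} with $\kappa=0$, which is allowed since the hypothesis there is $\kappa\geq0$. This yields
\[
\mu_1\geq \frac{n+1}{n(n-1)}\,R=\frac{n+1}{n}\,\lambda_1 ,
\]
which is \eqref{cor-static-weighted-eq}.

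\textbf{Step 3: the equality case.} If equality holds in \eqref{cor-static-weighted-eq}, then equality holds in \eqref{thmC-estimate}. Theorem~\ref{thmC} then says $(M^n,g)$ is isometric to $\mathbb{S}^n_+\bigl(\sqrt{n(n-1)/R}\bigr)$. Since $n(n-1)/R=n/\lambda_1$, this is the hemisphere of radius $\sqrt{n/\lambda_1}$. Conversely, the hemisphere with the height function as potential is static ($\kappa=0$) and attains equality in Theorem~\ref{thmC}. Because the two bounds coincide when $\kappa=0$, it also attains equality in \eqref{cor-static-weighted-eq}.

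There is essentially no obstacle here. The only points needing care are the identification of $f$ as a first Dirichlet eigenfunction (handled by its sign) and checking that the radius in Theorem~\ref{thmC} rewrites correctly in terms of $\lambda_1$.
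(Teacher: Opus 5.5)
Your proposal is correct and follows the paper's proof. The paper gets $\lambda_1=R/(n-1)$ from Proposition~\ref{pthmK1} with $\kappa=0$; that proposition is the Green's-identity pairing of $f$ with $\varphi_1$, which is the same as your orthogonality argument. Both the inequality and the equality case are then read off from Theorem~\ref{thmC}, using $n(n-1)/R=n/\lambda_1$.
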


\vspace{0.30cm}

The rest of this paper is organized as follows: In Section \ref{prelim}, we review the main background material, preliminary results on $V$-static metrics and key lemmas that will be used in the sequel. Section \ref{SecProofs} is devoted to the proofs of the main results.
 
\section{Background}
\label{prelim}
In this section, we review some basic facts and present some key results that will play a crucial role in the proof of the main theorems.

We begin recalling that a compact Riemannian manifold $(M^n,\,g)$ with boundary $\partial M$ is a $V$-static metric if there exists a nonnegative smooth function $f$ satisfying 
\begin{equation}\label{eqdefsec2}
    -(\Delta f)g + \nabla^2f - fRic =  \kappa g,
\end{equation} where $\kappa$ is a constant and $f^{-1}(0)=\partial M.$ 

Taking the trace in (\ref{eqdefsec2}), one sees that 
\begin{equation}\label{eqlap}
    \Delta f = -\frac{fR +  \kappa n}{n-1}.
\end{equation} 
From this, we have
\begin{equation}\label{eq2.3}
    \nabla^2f-fRic=-\frac{Rf+\kappa}{n-1}g
\end{equation}
and 
\begin{equation}\label{equivRicHess}
    f\mathring{Ric} = \mathring{\nabla^2 f},
\end{equation} where $\mathring{Z}=Z-\frac{{\rm tr}\,Z}{n}g$ denotes the traceless part of the tensor $Z.$ 

It should be mentioned that, choosing appropriate coordinates, $f$ and $g$ are analytic; see Proposition 2.1 in \cite{corv2013def}. Consequently, the set of regular points of $f$ is dense in $M^n.$ Besides, at every regular point of $f,$ the vector field $\nu=-\frac{\nabla f}{|\nabla f|}$ is normal to the corresponding level hypersurface. In particular, along $\partial M,$ it coincides with the outward unit normal. Also, it is known from  \cite[Theorem 7]{miao2009volume} that $|\nabla f|$ is constant (non null) on each connected component of $\partial M$. Furthermore, the second fundamental form of $\partial M$ is given by
\begin{equation*}
    \mathbb{I}\mathbb{I}(e_i, e_j)=\langle \nabla_{e_i}\nu, e_j\rangle,
\end{equation*}
where $\{e_1,...,e_{n-1}\}$ is an orthonormal frame on $\partial M$. Thus, one obtains from (\ref{eq2.3}) that
\begin{equation}\label{segform}
    \mathbb{I}\mathbb{I}(e_i, e_j)=-\left \langle \nabla_{e_i}\frac{\nabla f}{|\nabla f|}, e_j\right \rangle=\frac{\kappa}{(n-1)|\nabla f|}g_{ij}.
\end{equation}
Consequently, the mean  curvature is constant $H=\frac{\kappa}{|\nabla f|}$ on each connected component of $\partial M.$ Therefore, $\partial M$ is totally umbilical and when $\kappa=0$, $\partial M$ is totally geodesic. 

Also, it follows from \cite{Batistaetal2017}*{Lemma 6} that

$$
R^{\partial M}_{ij}
=
R_{ij}-R_{injn}
+
\frac{(n-2)\kappa^2}
{(n-1)^2|\nabla f|^2}g_{ij},
$$ where $\{e_1,\ldots,e_{n-1}\}$ is an orthonormal frame tangent to
$\partial M$. In particular, taking the trace, we obtain
\begin{equation}
\label{eqnhu850001a}
2R_{nn}+R^{\partial M}=R+\frac{(n-2)}{(n-1)}H^2;
\end{equation} see also \cite[Eq.~(45)]{miao2009volume}.

In the sequel, as mentioned in the Introduction, we derive an exact identity for the first Dirichlet eigenvalue of the Laplace operator on compact \(V\)-static manifolds with boundary.

\begin{proposition}
\label{pthmK1}
    Let $(M^n,\, g,\, f)$ be an $n$-dimensional compact $V$-static manifold with connected boundary $\partial M$ and let $\lambda_1$ denote the first Dirichlet eigenvalue of the Laplace operator on $M^n.$ If $\varphi_1$ is a positive first Dirichlet eigenfunction, then
\begin{equation}
\label{key1eqthmk1}
\lambda_1 = \frac{R}{n-1}
+
\frac{n\kappa}{n-1}
\left(\frac{\displaystyle\int_M \varphi_1\,dV_g}
{\displaystyle\int_M f\varphi_1\,dV_g}\right)
.\end{equation}
\end{proposition}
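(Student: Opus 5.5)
The identity should come from pairing the traced $V$-static equation \eqref{eqlap} with $\varphi_1$ and integrating by parts twice. Here $\varphi_1$ satisfies $\Delta\varphi_1=-\lambda_1\varphi_1$ in $M$, $\varphi_1=0$ on $\partial M$ and $\varphi_1>0$ in $int(M)$. By the structural fact recalled in the Introduction (cf. \cite{corv2013def}*{Proposition 2.1}), $R$ is constant because $M$ is connected. So the right-hand side of \eqref{eqlap} is an explicit affine function of $f$ with constant coefficients.

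\textbf{Step 1 (Green's identity).} Since both $f$ and $\varphi_1$ vanish on $\partial M$, Green's second identity gives
\[
\int_M \varphi_1\Delta f\,dV_g-\int_M f\Delta\varphi_1\,dV_g=\int_{\partial M}\Big(\varphi_1\frac{\partial f}{\partial\nu}-f\frac{\partial\varphi_1}{\partial\nu}\Big)\,dS_g=0 .
\]
Therefore
\[
\int_M \varphi_1\Delta f\,dV_g=-\lambda_1\int_M f\varphi_1\,dV_g .
\]

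\textbf{Step 2 (substitute \eqref{eqlap}).} Plugging $\Delta f=-\frac{fR+n\kappa}{n-1}$ into the left-hand side yields
\[
\frac{R}{n-1}\int_M f\varphi_1\,dV_g+\frac{n\kappa}{n-1}\int_M\varphi_1\,dV_g=\lambda_1\int_M f\varphi_1\,dV_g .
\]

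\textbf{Step 3 (divide).} We have $f>0$ and $\varphi_1>0$ in $int(M)$, so $\int_M f\varphi_1\,dV_g>0$. Dividing by this integral gives \eqref{key1eqthmk1}.

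The only point needing care is the justification of the boundary terms. Both functions are smooth up to the boundary: $f$ is analytic, and $\varphi_1$ is smooth by elliptic regularity on the smooth compact manifold with boundary. Both vanish identically on $\partial M$, so each boundary term is zero; in particular no information on $|\nabla f|$ along $\partial M$ is needed. The argument is otherwise routine.
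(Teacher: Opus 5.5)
Your proposal is correct and follows essentially the same route as the paper. Both apply Green's second identity to $f$ and $\varphi_1$ (both vanish on $\partial M$), substitute the traced equation \eqref{eqlap}, and divide by $\int_M f\varphi_1\,dV_g>0$; your explicit remark that $R$ is constant, which is needed to pull it out of the integral, makes precise a point the paper uses implicitly.
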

\begin{proof}
    Let $\varphi_1$ be a first Dirichlet eigenfunction of the Laplacian operator so that
$\varphi_1>0$ in $int(M).$ Thus, we have
\begin{equation}
\label{eqkj76990}
\Delta\varphi_1=-\lambda_1\varphi_1
\,\,\,\, \text{in } M\,\,\,\,\hbox{and}\,\,\,\,
\varphi_1=0
\,\,\,\, \text{on } \partial M.
\end{equation} Moreover, by (\ref{eqlap}), one sees that
\begin{equation}
\label{eqlap1111100}
\Delta f=-\frac{Rf+n\kappa}{n-1}.
\end{equation}

Since $f=0$ and 
$\varphi_1=0$ on $\partial M,$ we use Green's identity to infer
\[
\int_M
\left(
f\Delta\varphi_1-\varphi_1\Delta f
\right)dV_g
=
\int_{\partial M}
\left(
f\frac{\partial\varphi_1}{\partial\nu}
-
\varphi_1\frac{\partial f}{\partial\nu}
\right)dS_g
=0,
\]
that is,
\[
\int_M f\Delta\varphi_1\,dV_g
=
\int_M\varphi_1\Delta f\,dV_g.
\] Whence, it follows from (\ref{eqkj76990}) and (\ref{eqlap1111100})  that

\[
-\lambda_1
\int_M f\varphi_1\,dV_g
=
-\frac{R}{n-1}
\int_M f\varphi_1\,dV_g
-
\frac{n\kappa}{n-1}
\int_M\varphi_1\,dV_g,
\] so that

\[
\left(
\lambda_1-\frac{R}{n-1}
\right)
\int_M f\varphi_1\,dV_g
=
\frac{n\kappa}{n-1}
\int_M\varphi_1\,dV_g.
\] Taking into account that $f>0$ and $\varphi_1>0$ in $int(M)$, we achieve
\[
\lambda_1
=
\frac{R}{n-1}
+
\frac{n\kappa}{n-1}
\frac{\displaystyle\int_M\varphi_1\,dV_g}
{\displaystyle\int_M f\varphi_1\,dV_g},
\] which finishes the proof. 
\end{proof}

\begin{remark}
We note that Proposition \ref{pthmK1} shows that the position of the first Dirichlet eigenvalue relative to $\frac{R}{n-1}$ is completely determined by the sign of $\kappa$. In particular, for $\kappa>0,$ (\ref{key1eqthmk1}) implies
\begin{equation}
    \lambda_1 > \frac{R}{n-1}
+
\frac{n\kappa}{(n-1)f_{max}},
\end{equation} where $f_{max}$ is the maximum value of $f$ on $M^n.$
\end{remark}

\subsection{Generalized Reilly's formula} 
Reilly's formula \cite{Reilly2} has proved to be a powerful tool for deriving several geometric inequalities and rigidity results. For instance, Ros \cite{Ros} used it to establish an Alexandrov-type rigidity theorem for higher-order mean curvatures. Miao, Tam, and Xie \cite{MTX} applied Reilly's formula to derive a stability inequality for the Wang--Yau energy, while Kwong and Miao \cite{KW1} obtained a related inequality for boundaries of static spaces. Subsequently, Qiu and Xia \cite{Qiu-Xia} established a generalized Reilly formula, which they used to give an alternative proof of Alexandrov's theorem and to derive a new Heintze--Karcher-type inequality. 

The following generalized Reilly's formula due to Qiu and Xia \cite{Qiu-Xia} will be very useful (see also \cite{LiXia}).

 \begin{proposition}[\cite{Qiu-Xia}]\label{prop-quiu-xia}
Let $(M^n,\,g)$ be an $n$-dimensional, compact Riemannian manifold with boundary $\partial M.$ Given two functions $f,$ $u$ on $M$ and a constant $k,$ one has 

\begin{eqnarray*}
&&\int_{M} f \left[(\Delta u + knu)^{2}-|\nabla^{2}u+kug|^{2}\right]dV_g=(n-1)k\int_{M}(\Delta f +nkf)u^{2}dV_g \nonumber \\ &&+\int_{M}\left[\nabla ^{2}f-(\Delta f)g-2(n-1)kfg+fRic\right](\nabla u, \nabla u)dV_g \nonumber \\
&&+\int_{\partial M}f \left[2\left(\frac{\partial u}{\partial \nu}\right)\Delta_{\partial M}u+H\left(\frac{\partial u}{\partial \nu}\right)^{2}+\mathbb{II}(\nabla_{\partial M}u, \nabla_{\partial M} u)+2(n-1)k\left(\frac{\partial u}{\partial \nu}\right)u\right]dS_g\\
&&+ \int_{\partial M}\frac{\partial f}{\partial \nu}\left(|\nabla _{\partial M}u|^{2}-(n-1)ku^{2}\right)dS_g, \nonumber
\end{eqnarray*}
where $\mathbb{II}$ and $H=tr(\mathbb{II})$ stand for the second fundamental form and the mean curvature of $\partial M$, respectively.
 \end{proposition}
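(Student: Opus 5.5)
The plan is to reduce the identity to the case $k=0$ and then treat the $k$-dependent terms by elementary integrations by parts. First I expand the left-hand side algebraically:
\[
(\Delta u+knu)^2-|\nabla^2u+kug|^2=(\Delta u)^2-|\nabla^2u|^2+2(n-1)k\,u\Delta u+n(n-1)k^2u^2 .
\]
For the term $2(n-1)k\int_M f u\Delta u\,dV_g$, I integrate by parts once. Then I use $2u\langle\nabla f,\nabla u\rangle=\langle\nabla f,\nabla u^2\rangle$ and integrate by parts a second time. This gives
\[
2(n-1)k\int_M fu\Delta u
=-2(n-1)k\int_M f|\nabla u|^2+(n-1)k\int_M (\Delta f)u^2+2(n-1)k\int_{\partial M}f u\frac{\partial u}{\partial\nu}-(n-1)k\int_{\partial M}\frac{\partial f}{\partial\nu}u^2 .
\]
Adding $n(n-1)k^2\int_M fu^2$ produces exactly the terms $(n-1)k\int_M(\Delta f+nkf)u^2$, $-2(n-1)k f g(\nabla u,\nabla u)$, $2(n-1)kf u\,u_\nu$ and $-(n-1)k\,f_\nu u^2$ in the statement. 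So it remains to prove the formula with $k=0$.

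For $k=0$, I use the Bochner identity in divergence form. Set $X=\Delta u\,\nabla u-\nabla^2u(\nabla u,\cdot)^\sharp$. Then Bochner gives
\[
(\Delta u)^2-|\nabla^2u|^2=\operatorname{div}X+Ric(\nabla u,\nabla u).
\]
Multiplying by $f$ and integrating yields
\[
\int_M f[(\Delta u)^2-|\nabla^2u|^2]=\int_M fRic(\nabla u,\nabla u)+\int_{\partial M}f\langle X,\nu\rangle-\int_M\langle\nabla f,X\rangle .
\]
In $\int_M\Delta u\langle\nabla f,\nabla u\rangle$ I integrate by parts to get
\[
-\int_M\nabla^2 f(\nabla u,\nabla u)-\int_M\nabla^2u(\nabla f,\nabla u)+\int_{\partial M}u_\nu\langle\nabla f,\nabla u\rangle .
\]
For the Hessian term I use
\[
\int_M\nabla^2u(\nabla f,\nabla u)=\tfrac12\int_M\langle\nabla f,\nabla|\nabla u|^2\rangle=-\tfrac12\int_M\Delta f|\nabla u|^2+\tfrac12\int_{\partial M}f_\nu|\nabla u|^2 .
\]
Collecting the interior terms should give exactly $\int_M[\nabla^2f-(\Delta f)g+fRic](\nabla u,\nabla u)$, which I will double-check by sign bookkeeping.

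The main work, and the step I expect to be the obstacle, is the boundary computation. On $\partial M$ I decompose $\nabla u=\nabla_{\partial M}u+u_\nu\nu$ and use the standard identities
\[
\Delta u=\Delta_{\partial M}u+Hu_\nu+\nabla^2u(\nu,\nu),\qquad \nabla^2u(\nu,e_i)=e_i(u_\nu)-\mathbb{II}(\nabla_{\partial M}u,e_i).
\]
With these, $f\langle X,\nu\rangle$ becomes
\[
f\big[u_\nu\Delta_{\partial M}u+Hu_\nu^2+\mathbb{II}(\nabla_{\partial M}u,\nabla_{\partial M}u)-\langle\nabla_{\partial M}u_\nu,\nabla_{\partial M}u\rangle\big],
\]
since the $\nabla^2u(\nu,\nu)$ terms cancel. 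I then integrate by parts on the closed manifold $\partial M$:
\[
-\int_{\partial M}f\langle\nabla_{\partial M}u_\nu,\nabla_{\partial M}u\rangle=\int_{\partial M}fu_\nu\Delta_{\partial M}u+\int_{\partial M}u_\nu\langle\nabla_{\partial M}f,\nabla_{\partial M}u\rangle .
\]
This produces the factor $2$ in front of $f\,u_\nu\Delta_{\partial M}u$. The remaining task is to check that all terms containing $\nabla_{\partial M}f$ cancel. These come from the integration by parts just above, from the term $u_\nu\langle\nabla f,\nabla u\rangle$, and from $\tfrac12 f_\nu|\nabla u|^2$ after splitting $|\nabla u|^2=|\nabla_{\partial M}u|^2+u_\nu^2$. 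After this cancellation, the only surviving normal-derivative contribution should be $f_\nu|\nabla_{\partial M}u|^2$. I will verify this carefully, keeping track of the $u_\nu^2 f_\nu$ terms, which must also cancel. Combined with the $k$-terms from the first paragraph, this gives the stated formula.
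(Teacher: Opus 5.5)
The paper does not prove this proposition; it only quotes it from Qiu--Xia. Your derivation is correct and follows the standard route: expand the square, remove the $k$-terms by two integrations by parts, then apply Bochner in divergence form and decompose $\Delta u$ and $\nabla^2u(\nu,\cdot)$ along $\partial M$. All the identities you write down check out.

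There is one bookkeeping slip to fix. The term $\nabla^2u(\nabla f,\nabla u)$ occurs twice: once directly in $-\langle\nabla f,X\rangle$, and once after integrating $\Delta u\langle\nabla f,\nabla u\rangle$ by parts. The resulting boundary contribution is therefore $\int_{\partial M}f_\nu|\nabla u|^2$, not $\tfrac12\int_{\partial M}f_\nu|\nabla u|^2$. This is the same doubling that gives the interior coefficient $-(\Delta f)g$ you expect.

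With coefficient $1$ the boundary terms close up as follows:
\begin{itemize}
\item Its $f_\nu u_\nu^2$ part cancels the $-f_\nu u_\nu^2$ coming from $-u_\nu\langle\nabla f,\nabla u\rangle$.
\item The tangential terms $\pm u_\nu\langle\nabla_{\partial M}f,\nabla_{\partial M}u\rangle$ cancel.
\item Exactly $f_\nu|\nabla_{\partial M}u|^2$ survives, as required.
\end{itemize}
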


As an application of the generalized Reilly's formula, we obtain the following key result.

\begin{proposition}
\label{prop-harmonic}
Let $(M^n,\,g,\,f)$ be an $n$-dimensional compact $V$-static manifold with
connected boundary $\partial M$, and let $u$ be a harmonic function on $M$.
Then
\begin{align*}
\int_M f|\nabla^2u|^2\,dV_g
={}&
-\frac{\kappa}{n-1}\int_M |\nabla u|^2\,dV_g
-\frac{R}{n-1}\int_M f|\nabla u|^2\,dV_g \\
&\quad
+c\int_{\partial M}
\left(\frac{\partial u}{\partial \nu}\right)^2\,dS_g,
\end{align*}
where $\nu$ is the outward unit normal to $\partial M$ and $c:=|\nabla f|_{\partial M}.$
\end{proposition}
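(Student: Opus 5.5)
The plan is to combine the generalized Reilly formula of Proposition~\ref{prop-quiu-xia}, taken with $k=0$ and the $V$-static potential $f$ as weight, with a weighted Bochner identity. The weighted Bochner identity is what eliminates the Ricci term that Reilly's formula produces.

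\emph{Step 1: Reilly's formula for harmonic $u$.} With $k=0$ and $\Delta u=0$, the left-hand side of Proposition~\ref{prop-quiu-xia} becomes $-\int_M f|\nabla^2u|^2\,dV_g$. On $\partial M$ we have $f=0$, so every boundary term carrying the factor $f$ drops out. Since $\nu=-\nabla f/|\nabla f|$, we get $\frac{\partial f}{\partial\nu}=-c$, and the remaining boundary term is $-c\int_{\partial M}|\nabla_{\partial M}u|^2\,dS_g$. For the interior tensor we use \eqref{eqdefsec2}, namely $\nabla^2f-(\Delta f)g-f Ric=\kappa g$, which gives
\[
\nabla^2 f-(\Delta f)g+fRic=\kappa g+2fRic .
\]
Step 1 therefore yields
\[
-\int_M f|\nabla^2u|^2\,dV_g=\kappa\int_M|\nabla u|^2\,dV_g+2\int_M fRic(\nabla u,\nabla u)\,dV_g-c\int_{\partial M}|\nabla_{\partial M}u|^2\,dS_g .
\]

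\emph{Step 2: weighted Bochner identity.} For harmonic $u$, Bochner's formula reads $\tfrac12\Delta|\nabla u|^2=|\nabla^2u|^2+Ric(\nabla u,\nabla u)$. We multiply by $f$, integrate, and apply Green's identity. The term $f\,\partial_\nu|\nabla u|^2$ vanishes on $\partial M$, which gives
\[
\int_M f\big(|\nabla^2u|^2+Ric(\nabla u,\nabla u)\big)dV_g=\frac12\int_M(\Delta f)|\nabla u|^2\,dV_g+\frac c2\int_{\partial M}|\nabla u|^2\,dS_g .
\]
From this we solve for $2\int_M fRic(\nabla u,\nabla u)\,dV_g$ and substitute it into Step 1.

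\emph{Step 3: collect terms.} After the substitution, the boundary terms combine as
\[
c\int_{\partial M}\big(|\nabla u|^2-|\nabla_{\partial M}u|^2\big)\,dS_g=c\int_{\partial M}\Big(\frac{\partial u}{\partial\nu}\Big)^2 dS_g .
\]
The interior terms give
\[
\int_M f|\nabla^2u|^2\,dV_g=\kappa\int_M|\nabla u|^2\,dV_g+\int_M(\Delta f)|\nabla u|^2\,dV_g+c\int_{\partial M}\Big(\frac{\partial u}{\partial\nu}\Big)^2 dS_g .
\]
Finally we insert \eqref{eqlap}, $\Delta f=-\frac{fR+n\kappa}{n-1}$. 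The $\kappa$ coefficient is then $\kappa-\frac{n\kappa}{n-1}=-\frac{\kappa}{n-1}$, and the $R$ term is $-\frac{R}{n-1}\int_M f|\nabla u|^2\,dV_g$. This is exactly the claimed identity.

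The only delicate points are the signs in the boundary terms: the orientation of $\nu$ relative to $\nabla f$, and the fact that $c$ is constant on the connected boundary by \cite{miao2009volume}. The hardest step is to notice that Reilly's formula alone leaves an uncontrolled $fRic$ term. The plan handles this through the weighted Bochner identity of Step 2, rather than by trying to integrate $\nabla^2 f(\nabla u,\nabla u)$ by parts directly.
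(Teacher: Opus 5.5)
Your proposal is correct and follows the paper's proof essentially step for step. You apply Reilly's formula with $k=0$, rewrite the interior tensor as $\kappa g+2fRic$ using the $V$-static equation, and eliminate the Ricci term with the $f$-weighted Bochner identity and Green's formula. You then combine $|\nabla u|^2-|\nabla_{\partial M}u|^2=(\partial u/\partial\nu)^2$ and insert \eqref{eqlap}; the only cosmetic difference is that the paper substitutes $\Delta f$ before collecting terms, whereas you do so at the end.
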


\begin{proof}
Applying Reilly's formula (Proposition \ref{prop-quiu-xia}) considering $k=0,$ $\Delta u=0$ and $f=0$ on $\partial M,$ we obtain
\begin{align}
-\int_M f|\nabla^2u|^2\,dV_g
={}&
\int_M
\left(
\nabla^2f-(\Delta f)g+f Ric
\right)(\nabla u,\nabla u)\,dV_g
\nonumber\\
&\quad
+\int_{\partial M}
\frac{\partial f}{\partial \nu}
|\nabla_{\partial M}u|^2\,dS_g.
\label{prop-harmonic-eq1}
\end{align}

Since 
\begin{equation}
\label{eqrepetc}
\frac{\partial f}{\partial \nu}
=
\left\langle
\nabla f,-\frac{\nabla f}{|\nabla f|}
\right\rangle
=
-|\nabla f|
=
-c,
\end{equation} one sees from \eqref{prop-harmonic-eq1} that
\begin{align}
-\int_M f|\nabla^2u|^2\,dV_g
={}&
\kappa\int_M |\nabla u|^2\,dV_g
+
2\int_M
f Ric(\nabla u,\nabla u)\,dV_g
\nonumber\\
&\quad
-c\int_{\partial M}
|\nabla_{\partial M}u|^2\,dS_g.
\label{prop-harmonic-eq2}
\end{align}

Since $u$ is harmonic, Bochner's formula gives
\[
2f Ric(\nabla u,\nabla u)
=
f\Delta|\nabla u|^2
-
2f|\nabla^2u|^2.
\]
Integrating over $M$, we obtain
\begin{align*}
2\int_M
f Ric(\nabla u,\nabla u)\,dV_g
={}&
\int_M f\Delta|\nabla u|^2\,dV_g
-
2\int_M f|\nabla^2u|^2\,dV_g.
\end{align*}
By Green's formula,
\begin{equation*}
\int_M f\Delta|\nabla u|^2\,dV_g
=
\int_{\partial M}
\left(
f\frac{\partial |\nabla u|^2}{\partial \nu}
-
|\nabla u|^2\frac{\partial f}{\partial \nu}
\right)dS_g 
+\int_M |\nabla u|^2\Delta f\,dV_g.
\end{equation*}
Taking into account that $f=0$ on $\partial M$ and (\ref{eqrepetc}), one sees that
\[
\int_M f\Delta|\nabla u|^2\,dV_g
=
c\int_{\partial M}|\nabla u|^2\,dS_g
+
\int_M |\nabla u|^2\Delta f\,dV_g.
\]
By using (\ref{eqlap}), we get
\begin{align}
2\int_M
f Ric(\nabla u,\nabla u)\,dV_g
={}&
-\frac{R}{n-1}
\int_M f|\nabla u|^2\,dV_g
-
\frac{n\kappa}{n-1}
\int_M |\nabla u|^2\,dV_g
\nonumber\\
&\quad
+c\int_{\partial M}|\nabla u|^2\,dS_g
-
2\int_M f|\nabla^2u|^2\,dV_g.
\label{prop-harmonic-eq3}
\end{align}
Plugging \eqref{prop-harmonic-eq3} into
\eqref{prop-harmonic-eq2}, we obtain
\begin{align*}
-\int_M f|\nabla^2u|^2\,dV_g
={}&
\kappa\int_M |\nabla u|^2\,dV_g
-\frac{R}{n-1}
\int_M f|\nabla u|^2\,dV_g \\
&\quad
-\frac{n\kappa}{n-1}
\int_M |\nabla u|^2\,dV_g
+c\int_{\partial M}|\nabla u|^2\,dS_g \\
&\quad
-2\int_M f|\nabla^2u|^2\,dV_g
-c\int_{\partial M}
|\nabla_{\partial M}u|^2\,dS_g.
\end{align*}
Rearranging terms, one concludes that 
\begin{align}
\int_M f|\nabla^2u|^2\,dV_g
={}&
-\frac{\kappa}{n-1}
\int_M |\nabla u|^2\,dV_g
-\frac{R}{n-1}
\int_M f|\nabla u|^2\,dV_g
\nonumber\\
&\quad
+c\int_{\partial M}
\left(
|\nabla u|^2-|\nabla_{\partial M}u|^2
\right)dS_g.
\label{prop-harmonic-eq4}
\end{align}

Finally, along $\partial M$, it holds
\[
|\nabla u|^2-|\nabla_{\partial M}u|^2
=
\left(\frac{\partial u}{\partial \nu}\right)^2.
\]
Substituting this into \eqref{prop-harmonic-eq4}, one deduces that
\begin{eqnarray*}
\int_M f|\nabla^2u|^2\,dV_g
&=&
-\frac{\kappa}{n-1}\int_M |\nabla u|^2\,dV_g
-\frac{R}{n-1}\int_M f|\nabla u|^2\,dV_g
\nonumber\\&&+
c\int_{\partial M}
\left(\frac{\partial u}{\partial \nu}\right)^2\,dS_g.
\end{eqnarray*} So, the proof is completed. 
\end{proof}

\begin{remark}
If $\kappa\neq 0,$ then $H=\kappa/c.$ Thus, in the non-static case, the last term in the right hand side of Proposition \ref{prop-harmonic} can be expressed in terms of the mean curvature $H$ of the boundary $\partial M.$
\end{remark}

\subsection{Fichera-type duality} 
In \cite{Fichera1955}, Fichera introduced a general
duality principle in the Euclidean setting. Extensions to less regular domains were later obtained
by Bucur, Ferrero, and Gazzola \cite{BFG2009}, while the corresponding
duality principle for the entire spectrum was recently established by
Ferraresso and Lamberti \cite{FerraressoLamberti}; for more details, see \cites{BFG2009,FerraressoLamberti} and the re\-fe\-rences therein. As we shall show in the next
lemma, the same argument extends naturally to compact Riemannian manifolds
with smooth boundary, and this formulation will be used in the proof of
Theorem~\ref{ThmD_k}.

In order to proceed, let
\[
\mathcal H^\infty(M)
=
\left\{
h\in C^\infty(\overline M):
\Delta h=0 \text{ in }M
\right\}.
\]
We define $\mathcal H(M)$ as the completion of
$\mathcal H^\infty(M)$ with respect to the norm
\[
\|h\|_{\mathcal H(M)}
=
\left(
\int_{\partial M}h^2\,dS_g
\right)^{1/2}.
\]
For an element of $\mathcal H(M)$, its boundary value is understood as
the $L^2(\partial M)$-limit of any defining Cauchy sequence in
$\mathcal H^\infty(M)$. By the mapping properties of the Poisson operator on a compact
Riemannian manifold with smooth boundary, there exists a constant
$C>0$ such that
\begin{equation}\label{eq-harmonic-extension}
\|h\|_{H^{1/2}(M)}
\leq
C\|h\|_{L^2(\partial M)}
\end{equation}
for every $h\in\mathcal H^\infty(M)$; see, for instance,
\cite[Chapter~7]{TaylorPDEII}. Consequently, every element of
$\mathcal H(M)$ can be identified with a harmonic function in the
distributional sense, and the inclusion
\[
\mathcal H(M)\hookrightarrow L^2(M)
\]
is compact. Moreover, the harmonic Bergman space is given by
\[
L^2_{\mathcal H}(M)
=
\overline{\mathcal H(M)}^{\,L^2(M)}.
\] Thus, $\mathcal H(M)$ is a dense subspace of
$L^2_{\mathcal H}(M)$.

With this notation, we have the following Fichera-type duality.

\begin{lemma}[Fichera-type duality]
\label{lem:Fichera}
Let $(M^n,g)$ be a compact Riemannian manifold with smooth boundary
$\partial M$. A nontrivial function $u\in C^\infty(M)$ solves (\ref{Stek4})
if and only if the harmonic function $h=\Delta u$ is nontrivial and satisfies
\begin{equation}
\label{eq:Fichera-dual}
\int_{\partial M}h\psi\,dS_g
=
q\int_Mh\psi\,dV_g
\end{equation}
for every function $\psi\in \mathcal H(M)$.
\end{lemma}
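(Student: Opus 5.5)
The proof rests on Green's second identity applied to the pair $(u,\psi)$, with $\psi$ harmonic. Along the way we use that $\mathcal H^\infty(M)$ is dense in $\mathcal H(M)$ for the $L^2(\partial M)$-norm, and that both sides of \eqref{eq:Fichera-dual} are continuous in $\psi$ for that norm. Continuity of the left side is Cauchy--Schwarz on $\partial M$. Continuity of the right side follows from \eqref{eq-harmonic-extension}, since $\|\psi\|_{L^2(M)}\le\|\psi\|_{H^{1/2}(M)}\le C\|\psi\|_{L^2(\partial M)}$. Hence it suffices to verify \eqref{eq:Fichera-dual} for $\psi\in\mathcal H^\infty(M)$; the converse direction may also use such test functions.

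\emph{Forward direction.} Let $u$ be a nontrivial smooth solution of (\ref{Stek4}) and set $h=\Delta u$. Then $\Delta h=\Delta^2u=0$, so $h$ is harmonic and smooth up to the boundary. If $h\equiv0$, then $u$ is harmonic with $u=0$ on $\partial M$, so $u\equiv0$, a contradiction; thus $h$ is nontrivial. For $\psi\in\mathcal H^\infty(M)$, Green's identity gives
\[
\int_M(\psi\Delta u-u\Delta\psi)\,dV_g=\int_{\partial M}\Big(\psi\frac{\partial u}{\partial\nu}-u\frac{\partial\psi}{\partial\nu}\Big)dS_g .
\]
Since $\Delta\psi=0$ and $u=0$ on $\partial M$, this reduces to $\int_M h\psi\,dV_g=\int_{\partial M}\psi\,\partial_\nu u\,dS_g$. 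Multiplying by $q$ and using $q\,\partial_\nu u=\Delta u=h$ on $\partial M$ yields \eqref{eq:Fichera-dual}.

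\emph{Converse.} Here we take $u$ smooth with $h=\Delta u$ harmonic and nontrivial, satisfying \eqref{eq:Fichera-dual}. Implicitly $u$ should also satisfy $u=0$ on $\partial M$; otherwise $u$ is only determined up to adding a harmonic function, so I read the statement with the Dirichlet condition built in (equivalently, $u$ is the solution of $\Delta u=h$, $u|_{\partial M}=0$). Then $\Delta^2u=0$ and $u=0$ on $\partial M$. The Green computation above gives $\int_{\partial M}\psi\,\partial_\nu u\,dS_g=\int_M h\psi\,dV_g$ for all $\psi\in\mathcal H^\infty(M)$, and combining this with \eqref{eq:Fichera-dual} gives
\[
\int_{\partial M}\psi\,(h-q\,\partial_\nu u)\,dS_g=0 .
\]
The boundary traces of $\mathcal H^\infty(M)$ are all of $C^\infty(\partial M)$, because the Dirichlet problem is solvable with smooth data. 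This set is dense in $L^2(\partial M)$, so $h=q\,\partial_\nu u$ on $\partial M$, which is the remaining boundary condition in (\ref{Stek4}). Finally, $u$ is nontrivial because $h\neq0$.

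\emph{Main obstacle.} The delicate point is the functional-analytic bookkeeping: justifying the passage from $\mathcal H^\infty(M)$ to its completion $\mathcal H(M)$ via \eqref{eq-harmonic-extension}, and making precise the convention that $u$ vanishes on $\partial M$ in the converse. The Green's-identity computation itself is routine.
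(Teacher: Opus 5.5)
Your proposal is correct and follows essentially the same route as the paper. Green's identity with harmonic $\psi$ and $u|_{\partial M}=0$ gives $\int_M h\psi\,dV_g=\int_{\partial M}\psi\,\partial_\nu u\,dS_g$; in the converse the paper likewise defines $u$ as the solution of $\Delta u=h$, $u=0$ on $\partial M$, and tests against harmonic extensions of arbitrary smooth boundary data. Your continuity argument, extending \eqref{eq:Fichera-dual} from $\mathcal H^\infty(M)$ to $\mathcal H(M)$ via \eqref{eq-harmonic-extension}, spells out a step the paper leaves implicit.
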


\begin{proof}
    On $\mathcal H(M),$ consider the symmetric bilinear forms 
    \begin{equation}
       \label{formA}
        A(h,\psi)=\int_{\partial M} h\psi \,dS_g
    \end{equation} and

    \begin{equation}
    \label{formB}
        B(h,\psi)=\int_{M} h\psi\, dV_g.
    \end{equation} With this notation, the dual eigenvalue problem (\ref{eq:Fichera-dual}) becomes

\begin{equation}
\label{ABrelation}
    A(h,\psi)=q B(h,\psi)\,\,\,\,\hbox{for every}\,\,\psi\in \mathcal H(M).
\end{equation}

Suppose that $u$ is a nontrivial solution of (\ref{Stek4}) and $h=\Delta u.$ Since $\Delta^2 u=0$ in $M,$ we have $\Delta h=0,$ which shows that $h\in \mathcal H(M).$ Moreover, $h$ is nontrivial. Indeed, if $h\equiv 0,$ then $u$ would be harmonic and, since $u=0$ on $\partial M,$ the uniqueness of the Dirichlet problem would give $u\equiv 0,$ which is a contradiction. 

Next, for $\psi\in \mathcal H(M),$ Green's identity implies 

\begin{equation*}
    \int_{M} h\psi \,dV_g = \int_{M} (\Delta u)\psi \,dV_g= \int_{\partial M} \psi \frac{\partial u}{\partial \nu}\, dS_g.
\end{equation*} At the same time, the boundary condition in (\ref{Stek4}) yields

$$h=\Delta u=q \frac{\partial u}{\partial \nu}\,\,\,\,\,\hbox{on}\,\,\,\,\partial M,$$ and hence, 

\begin{equation*}
    A(h,\psi) = \int_{\partial M} h\psi \,dS_g=q\int_{\partial M} \psi\, \frac{\partial u}{\partial \nu}\,dS_g= q\int_{M}h\psi\,dV_g=qB(h,\psi),
\end{equation*} which proves (\ref{eq:Fichera-dual}).

Conversely, suppose that $0\neq h\in \mathcal H(M)$ satisfies (\ref{ABrelation}). Let $u$ be the unique solution of 

$$\Delta u=h\,\,\hbox{in}\,\, M\,\,\,\,\,\,\,\hbox{and}\,\,\,\,\,\,\,u=0\,\,\hbox{on}\,\,\partial M.$$ In particular, since $h$ is harmonic, we have $\Delta^2 u=0.$

Next, for any $\psi\in\mathcal H(M)$, Green's identity gives
\[
\int_M h\psi\,dV_g
=
\int_{\partial M}
\psi\frac{\partial u}{\partial\nu}\,dS_g.
\]
This substituted into
(\ref{ABrelation}) yields
\[
\int_{\partial M}
\left(
h-q\frac{\partial u}{\partial\nu}
\right)\psi\,dS_g
=0
\]
for every $\psi\in\mathcal H(M)$. Now, let $\varphi$ be an arbitrary smooth function on $\partial M$,
and consider $\psi$ to be its harmonic extension to $M$. Since $\psi|_{\partial M}=\varphi,$
we infer
\[
\int_{\partial M}
\left(
h-q\frac{\partial u}{\partial\nu}
\right)\varphi\,dS_g
=0
\]
for every $\varphi\in C^\infty(\partial M)$. Consequently, $h=q\frac{\partial u}{\partial\nu}$ on $\partial M.$
Recalling that $h=\Delta u$, we conclude that
\[
\Delta u
=
q\frac{\partial u}{\partial\nu}
\qquad\text{on }\partial M.
\]
Thus $u$ solves \eqref{Stek4}, and the equivalence is proved.
\end{proof}

\subsection{Bochner-type formula for $L_f$}

A complete smooth metric measure space $(M^n, g, e^{-h}dV_g)$ is a complete $n$-di\-men\-sio\-nal Riemannian manifold $(M^n, g)$ together with a weighted volume form $e^{-h}dV_g$ on $M^n$, where $h$ is a smooth function on $M^n$ and $dV_g$ the volume element induced by the metric $g$. A suitable operator on a smooth metric measure space is the drifted Laplacian operator 
\begin{align}\label{deflaplacianof}
    \Delta_h=\Delta -\langle \nabla h, \nabla\cdot\rangle,
\end{align}
where $\Delta$ is the Laplacian on $M^n.$

Choosing $h=-\ln f,$ the weighted measure becomes $e^{-h}dV_g = f \, dV_g$, and the drifted Laplacian can be written as
\begin{align}\label{deflapla-lnf}
L_f w:=\Delta_{- \ln f} w= \Delta w + \frac{1}{f} \langle \nabla  f, \nabla w \rangle.
\end{align} Notice that, for a compact $V$-static manifold with boundary, $L_f$ is smooth only in $int(M).$

Recall that, for $N>n$, the $N$-dimensional Bakry--\'Emery Ricci
tensor  \cite{bakry} of a weighted manifold $(M^n,g,e^{-h}dV_g)$ is defined by
\begin{equation*}
Ric_h^N
=
Ric+\nabla^2 h
-\frac{1}{N-n}dh\otimes dh.
\end{equation*} Taking $N=n+1$ and $h=-\ln f$, the $V$-static equation \eqref{eqVS} yields
\begin{equation}
Ric_h^{n+1}
=
\frac{1}{n-1}
\left(
R+\frac{\kappa}{f}
\right)g
\end{equation}
in $int (M).$ Notably, when $\kappa\geq0$, this
identity provides a natural lower bound for the $(n+1)$-dimensional
Bakry--\'Emery Ricci tensor.

The next lemma corresponds to a Bochner-type formula for $L_f.$ 

\begin{lemma}\label{lemma1}
    Let $(M^n,\, g)$ be a Riemannian manifold and $w, f \in C^{\infty}(M)$, with $f>0.$ Then
    
\begin{align*}
\frac{1}{2}L_f|\nabla w|^2=&|\nabla^2w|^2+Ric(\nabla w,\nabla w)-\frac{1}{f}\nabla^2f(\nabla w,\nabla w)\\
&+\langle\nabla (L_fw),\nabla w\rangle+\frac{1}{f^2}\langle\nabla w,\nabla f \rangle^2.
\end{align*}
\end{lemma}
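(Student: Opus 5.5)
The plan is to expand $L_f|\nabla w|^2$ directly and then use the classical Bochner formula. By the definition \eqref{deflapla-lnf},
\[
\frac12 L_f|\nabla w|^2=\frac12\Delta|\nabla w|^2+\frac{1}{2f}\langle\nabla f,\nabla|\nabla w|^2\rangle .
\]
For the first term, Bochner's formula gives
\[
\frac12\Delta|\nabla w|^2=|\nabla^2 w|^2+Ric(\nabla w,\nabla w)+\langle\nabla\Delta w,\nabla w\rangle .
\]
For the second term, $\frac12\nabla|\nabla w|^2=\nabla^2 w(\nabla w,\cdot)$, so it equals $\frac1f\nabla^2 w(\nabla f,\nabla w)$.

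Next, $\Delta w$ is rewritten in terms of $L_f w$. Since $\Delta w=L_f w-\frac1f\langle\nabla f,\nabla w\rangle$, differentiating gives
\[
\langle\nabla\Delta w,\nabla w\rangle=\langle\nabla(L_f w),\nabla w\rangle+\frac{1}{f^2}\langle\nabla f,\nabla w\rangle^2-\frac1f\big[\nabla^2 f(\nabla w,\nabla w)+\nabla^2 w(\nabla f,\nabla w)\big].
\]

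Adding the pieces, the mixed Hessian terms $\pm\frac1f\nabla^2 w(\nabla f,\nabla w)$ cancel. What remains is exactly
\[
|\nabla^2 w|^2+Ric(\nabla w,\nabla w)-\frac1f\nabla^2 f(\nabla w,\nabla w)+\langle\nabla(L_f w),\nabla w\rangle+\frac1{f^2}\langle\nabla w,\nabla f\rangle^2 .
\]
This is the stated identity.

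The only point needing care is the sign and symmetric bookkeeping of $\nabla\langle\nabla f,\nabla w\rangle=\nabla^2 f(\nabla w,\cdot)+\nabla^2 w(\nabla f,\cdot)$, which is what produces the cancellation. There is no analytic obstacle, since $f>0$ makes everything smooth pointwise.
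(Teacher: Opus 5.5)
Your proposal is correct and follows essentially the same route as the paper. It applies Bochner's formula to $\frac12\Delta|\nabla w|^2$, rewrites the drift term as $\frac1f\nabla^2w(\nabla f,\nabla w)$, expands $\langle\nabla(L_fw),\nabla w\rangle$ via the product rule, and then cancels the mixed terms $\pm\frac1f\nabla^2w(\nabla f,\nabla w)$, with all signs handled correctly.
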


\begin{proof}
By (\ref{deflapla-lnf}) and the classical Bochner's formula, we have
\begin{eqnarray}\label{lemAeq1}
\frac{1}{2}L_f|\nabla w|^2&=&\frac{1}{2}\Delta|\nabla w|^2+\frac{1}{2f}\langle \nabla |\nabla w|^2,\nabla f\rangle\nonumber\\
 &=&|\nabla^2 w|^2+Ric(\nabla w,\nabla w)+\langle\nabla\Delta w,\nabla w\rangle+\frac{1}{f}\nabla^2w(\nabla w,\nabla f).
\end{eqnarray}
Note that
\begin{eqnarray*}
\langle\nabla(L_f w),\nabla w\rangle&=&\langle\nabla(\Delta w+\frac{1}{f}\langle\nabla w,\nabla f\rangle),\nabla w\rangle\nonumber\\
 &=&\langle\nabla\Delta w,\nabla w\rangle+\frac{1}{f}\langle\nabla(\langle \nabla w,\nabla f\rangle),\nabla w\rangle-\frac{1}{f^2}\langle \nabla w,\nabla f\rangle^2\nonumber\\
 &=&\langle\nabla\Delta w,\nabla w\rangle+\frac{1}{f}\nabla^2w(\nabla w,\nabla f)\\
 &&+\frac{1}{f}\nabla^2f(\nabla w,\nabla w)-\frac{1}{f^2}\langle \nabla w,\nabla f\rangle^2.
\end{eqnarray*} This combined with (\ref{lemAeq1}) yields
\begin{align*}
\frac{1}{2}L_f|\nabla w|^2=&|\nabla^2w|^2+Ric(\nabla w,\nabla w)-\frac{1}{f}\nabla^2f(\nabla w,\nabla w)\\
&+\langle\nabla (L_f w),\nabla w\rangle+\frac{1}{f^2}\langle\nabla w,\nabla f \rangle^2, 
\end{align*} as stated. 
\end{proof}

The following algebraic lemma will be useful.

\begin{lemma}\label{lemma-tec}
Let $a,b : M \to \mathbb{R}$ be real-valued functions and let $\alpha>0$. Then
\begin{equation}
(a+b)^2\geq\frac{a^2}{1+\alpha}-\frac{b^2}{\alpha}\label{des1}
\end{equation}
holds on $M$. Moreover, equality holds in (\ref{des1}) if and only if $b=-\frac{\alpha}{1+\alpha}a$.
\begin{proof}
Notice that
\begin{eqnarray*}
0&\leq&\left(\sqrt{\frac{\alpha}{1+\alpha}}a+\sqrt{\frac{1+\alpha}{\alpha}}\,b\right)^2\\
 &=&\frac{\alpha}{1+\alpha}a^2+2ab+\frac{1+\alpha}{\alpha}b^2\\
 &=&(a+b)^2-\frac{a^2}{1+\alpha}+\frac{b^2}{\alpha},
\end{eqnarray*} which proves the stated inequality. 
\end{proof}
\end{lemma}

As an application of Lemmas \ref{lemma1} and \ref{lemma-tec}, we have the following result.

\begin{lemma}\label{lemB}
Let $(M^n,\,g,\,f)$ be a $V$-static manifold. Then, for every $w \in C^{\infty}(M),$ the following inequality holds in $int(M)$:
\begin{equation}\label{eqlemaB}
\frac{1}{2} L_f |\nabla w|^2\geq \frac{(L_f  w)^2}{(n+1)}+\frac{1}{n-1}\left(R+\frac{\kappa}{f}\right)|\nabla w|^2 +\langle\nabla ( L_f w),\nabla w\rangle.
\end{equation}
Moreover, equality in (\ref{eqlemaB}) holds if and only if  $\nabla^2 w = \frac{\Delta w}{n} g = \frac{\langle \nabla f, \nabla w\rangle}{f} g$.
\end{lemma}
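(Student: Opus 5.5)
Start from Lemma~\ref{lemma1} and rewrite its curvature terms with the $V$-static equation. By \eqref{eq2.3}, in $int(M)$,
\[
Ric(\nabla w,\nabla w)-\frac{1}{f}\nabla^2 f(\nabla w,\nabla w)=\frac{1}{f}\bigl(fRic-\nabla^2 f\bigr)(\nabla w,\nabla w)=\frac{Rf+\kappa}{(n-1)f}|\nabla w|^2 .
\]
The right-hand side equals $\frac{1}{n-1}\bigl(R+\frac{\kappa}{f}\bigr)|\nabla w|^2$. Lemma~\ref{lemma1} then becomes the exact identity
\[
\frac12 L_f|\nabla w|^2=|\nabla^2 w|^2+\frac{1}{f^2}\langle\nabla w,\nabla f\rangle^2+\frac{1}{n-1}\Bigl(R+\frac{\kappa}{f}\Bigr)|\nabla w|^2+\langle\nabla(L_fw),\nabla w\rangle .
\]
It therefore suffices to prove the pointwise algebraic inequality
\[
|\nabla^2 w|^2+\frac{\langle\nabla w,\nabla f\rangle^2}{f^2}\geq\frac{(L_fw)^2}{n+1},
\]
together with its equality characterization.

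Set $a=\Delta w$ and $b=\frac{1}{f}\langle\nabla f,\nabla w\rangle$, so that $L_f w=a+b$. Cauchy--Schwarz on the trace gives $|\nabla^2 w|^2\geq\frac{(\Delta w)^2}{n}=\frac{a^2}{n}$, with equality iff $\nabla^2 w=\frac{\Delta w}{n}g$. Now apply Lemma~\ref{lemma-tec} with $\alpha=\frac1n$:
\[
(a+b)^2\geq\frac{a^2}{1+1/n}-\frac{b^2}{1/n}=\frac{n}{n+1}a^2-nb^2 .
\]
Dividing by $n$ yields $\frac{a^2}{n}+b^2\geq\frac{(a+b)^2}{n+1}$. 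Combining the two bounds gives $|\nabla^2 w|^2+b^2\geq\frac{(L_fw)^2}{n+1}$, and hence \eqref{eqlemaB}.

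For the equality case, both inequalities must be equalities. The first requires $\nabla^2 w=\frac{\Delta w}{n}g$. Lemma~\ref{lemma-tec} requires $b=-\frac{\alpha}{1+\alpha}a=-\frac{a}{n+1}$.

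Here lies the main point to check, since this does not literally read $b=\frac{a}{n}$ as stated. Checking directly, equality in $\frac{a^2}{n}+b^2\geq\frac{(a+b)^2}{n+1}$ is the Cauchy--Schwarz equality case for the vectors $(1,1)$ and $(\sqrt n\cdot\frac{a}{n},b)$ with weights $(n,1)$. It holds iff $\frac{a}{n}=b$, i.e.
\[
\frac{\Delta w}{n}=\frac{\langle\nabla f,\nabla w\rangle}{f}.
\]
This matches the claimed condition. I would therefore verify the sign convention in Lemma~\ref{lemma-tec}: its stated condition should be recomputed from the perfect square, which vanishes when $\frac{\alpha}{1+\alpha}a=-b$ after the $b$-sign is accounted for. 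If needed, I would bypass that lemma and prove the two-term inequality via Cauchy--Schwarz, as above. Conversely, if $\nabla^2 w=\frac{\Delta w}{n}g=\frac{\langle\nabla f,\nabla w\rangle}{f}g$, all the steps are equalities, so equality holds in \eqref{eqlemaB}.
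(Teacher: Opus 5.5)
Your overall route is the paper's: Lemma~\ref{lemma1}, the identity $Ric-\frac1f\nabla^2f=\frac{1}{n-1}\bigl(R+\frac{\kappa}{f}\bigr)g$ from \eqref{eq2.3}, the trace bound $|\nabla^2w|^2\ge\frac{(\Delta w)^2}{n}$, and Lemma~\ref{lemma-tec} with $\alpha=\frac1n$. The paper does the algebra before substituting the $V$-static identity, which is immaterial.

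The one real error is in how you invoke Lemma~\ref{lemma-tec}. With your labels $a=\Delta w$ and $b=\frac1f\langle\nabla f,\nabla w\rangle$, the lemma bounds $(a+b)^2=(L_fw)^2$ from \emph{below}: $(a+b)^2\ge\frac{n}{n+1}a^2-nb^2$. Dividing by $n$ gives $\frac{(a+b)^2}{n}+b^2\ge\frac{a^2}{n+1}$, not the inequality $\frac{a^2}{n}+b^2\ge\frac{(a+b)^2}{n+1}$ that you need. The lemma must be applied with the roles swapped, as the paper does. Take $A=L_fw$ and $B=-\frac1f\langle\nabla w,\nabla f\rangle$, so that $A+B=\Delta w$. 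Then $\frac{(\Delta w)^2}{n}\ge\frac{(L_fw)^2}{n+1}-\frac{1}{f^2}\langle\nabla w,\nabla f\rangle^2$. The equality condition $B=-\frac{A}{n+1}$ unwinds to $\frac1f\langle\nabla w,\nabla f\rangle=\frac{1}{n+1}\bigl(\Delta w+\frac1f\langle\nabla w,\nabla f\rangle\bigr)$, i.e.\ $\Delta w=\frac nf\langle\nabla f,\nabla w\rangle$. So Lemma~\ref{lemma-tec} has no sign problem. The mismatch you noticed, $b=-\frac{a}{n+1}$ versus $b=\frac an$, comes only from the swapped roles: it is the equality case of an inequality you never actually use.

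Your fallback does repair the argument. One has $(n+1)\bigl(\frac{a^2}{n}+b^2\bigr)-(a+b)^2=\frac{(a-nb)^2}{n}\ge0$, with equality iff $\Delta w=\frac nf\langle\nabla f,\nabla w\rangle$. Together with the trace equality case, this gives exactly the stated characterization. (In your weighted Cauchy--Schwarz description, use the vector $(a/n,b)$ with weights $(n,1)$, not $(\sqrt n\,a/n,b)$.) With that step replacing the misapplied lemma, your proof is complete and coincides with the paper's.
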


\begin{proof}
Since $|\nabla^2w|^2\geq\frac{(\Delta w)^2}{n},$ we use Lemma \ref{lemma1} and (\ref{deflapla-lnf}) to obtain
\begin{eqnarray*}
\frac{1}{2} L_f |\nabla w|^2&\geq&\frac{(\Delta w)^2}{n}+Ric(\nabla w,\nabla w)-\frac{1}{f}\nabla^2f(\nabla w,\nabla w)\\
&&+\langle\nabla (L_f   w),\nabla w\rangle+\frac{1}{f^2}\langle\nabla f,\nabla w\rangle^2\\
 &=&\frac{(L_f w-\frac{1}{f}\langle\nabla w,\nabla f\rangle)^2}{n}+Ric(\nabla w,\nabla w)-\frac{1}{f}\nabla^2f(\nabla w,\nabla w)\nonumber\\
 &&+\langle\nabla(L_f w),\nabla w\rangle+\frac{1}{f^2}\langle\nabla f,\nabla w\rangle^2.
\end{eqnarray*}
By Lemma \ref{lemma-tec}, one deduces that
\begin{eqnarray*}
 \frac{1}{2}L_f |\nabla w|^2&\geq&\frac{(L_f w)^2}{(1+\alpha)n}-\frac{1}{\alpha nf^2}\langle\nabla w,\nabla f\rangle^2+Ric(\nabla w,\nabla w)\\
 &&-\frac{1}{f}\nabla^2f(\nabla w,\nabla w)\nonumber+\langle\nabla (L_f w),\nabla w\rangle+\frac{1}{f^2}\langle\nabla w,\nabla f\rangle^2.
\end{eqnarray*} Now, it suffices to take $\alpha=\frac{1}{n}$ in order to infer

\begin{eqnarray}
\label{qwe45ggghj8}
\frac{1}{2} L_f |\nabla w|^2 &\geq &\frac{(L_f  w)^2}{(n+1)}+Ric(\nabla w,\nabla w)-\frac{1}{f}\nabla^2f(\nabla w,\nabla w)\nonumber\\
&& +\langle\nabla ( L_f w),\nabla w\rangle.
\end{eqnarray}

On the other hand, it follows from (\ref{eq2.3}) that

\begin{eqnarray*}
    Ric - \frac{1}{f}\nabla^2 f=\frac{1}{n-1}\left( R+\frac{\kappa}{f}\right)g
\end{eqnarray*} in $int(M),$ and hence,

\begin{equation*}
    Ric(\nabla w, \nabla w)-\frac{1}{f}\nabla^2 f(\nabla w,\nabla w)=\frac{1}{n-1}\left(R+\frac{\kappa}{f}\right)|\nabla w|^2.
\end{equation*} Substituting this into (\ref{qwe45ggghj8}) yields

$$\frac{1}{2} L_f |\nabla w|^2\geq \frac{(L_f  w)^2}{(n+1)}+\frac{1}{n-1}\left(R+\frac{\kappa}{f}\right)|\nabla w|^2 +\langle\nabla ( L_f w),\nabla w\rangle,$$ which proves (\ref{eqlemaB}).

We now address the equality case. Indeed, equality holds if and only if

\begin{align}\label{hesw}
    \nabla^2 w = \frac{\Delta w}{n} g.
\end{align}
Moreover, by Lemma \ref{lemma-tec}, we have
\begin{equation*}
    \frac{1}{f}\langle \nabla w, \nabla f\rangle
    = \frac{1}{n+1}L_f  w = \frac{1}{n+1}\left(\Delta w + \frac{1}{f}\langle \nabla w, \nabla f\rangle\right),
\end{equation*}
which implies
\begin{align}\label{Deltaw}
    \Delta w = \frac{n}{f}\langle \nabla f, \nabla w\rangle.
\end{align}
Combining (\ref{hesw}) and (\ref{Deltaw}), one sees that equality holds in (\ref{eqlemaB}) if and only if
\begin{align*}
    \nabla^2 w = \frac{\Delta w}{n} g = \frac{\langle \nabla f, \nabla w\rangle}{f} g.
\end{align*} This completes the proof. 
\end{proof}

\section{Proof of the Main Results}
\label{SecProofs}

\subsection{Proof of Theorem \ref{thmB}}

\begin{proof}
Let $u$ be an eigenfunction associated with the first nonzero Steklov
eigenvalue $p_1$, that is,
\begin{equation}\label{thmB-eq1}
\begin{cases}
\Delta u=0 & \text{in } M,\\[2mm]
\dfrac{\partial u}{\partial \nu}=p_1u
& \text{on } \partial M.
\end{cases}
\end{equation}

By Proposition \ref{prop-harmonic}, we have
\begin{align}
\int_M f|\nabla^2u|^2\,dV_g
={}&
-\frac{\kappa}{n-1}\int_M|\nabla u|^2\,dV_g
-\frac{R}{n-1}\int_Mf|\nabla u|^2\,dV_g
\nonumber\\
&\quad
+c\int_{\partial M}
\left(\frac{\partial u}{\partial \nu}\right)^2\,dS_g.
\label{thmB-eq2}
\end{align} Using the Steklov boundary condition in \eqref{thmB-eq1}, equation
\eqref{thmB-eq2} becomes
\begin{align}
\int_M f|\nabla^2u|^2\,dV_g
={}&
-\frac{\kappa}{n-1}\int_M|\nabla u|^2\,dV_g
-\frac{R}{n-1}\int_Mf|\nabla u|^2\,dV_g
\nonumber\\
&\quad
+cp_1^2\int_{\partial M}u^2\,dS_g.
\label{thmB-eq3}
\end{align}

On the other hand, since $\Delta u=0$, the divergence theorem yields
\begin{equation*}
\int_M|\nabla u|^2\,dV_g =
\int_M\operatorname{div}(u\nabla u)\,dV_g =
\int_{\partial M}
u\frac{\partial u}{\partial \nu}\,dS_g=
p_1\int_{\partial M}u^2\,dS_g.
\end{equation*} Consequently,
\[
p_1^2\int_{\partial M}u^2\,dS_g
=
p_1\int_M|\nabla u|^2\,dV_g.
\]
Substituting this into \eqref{thmB-eq3}, we obtain
\begin{align}
\int_M f|\nabla^2u|^2\,dV_g
={}&
\left(
cp_1-\frac{\kappa}{n-1}
\right)
\int_M|\nabla u|^2\,dV_g
\nonumber\\
&\quad
-\frac{R}{n-1}
\int_Mf|\nabla u|^2\,dV_g.
\label{thmB-eq4}
\end{align}

Recall that $H=\frac{\kappa}{c},$ and hence \eqref{thmB-eq4} can be rewritten as
\begin{align}
c\left(
p_1-\frac{H}{n-1}
\right)
\int_M|\nabla u|^2\,dV_g
={}&
\int_Mf|\nabla^2u|^2\,dV_g
\nonumber\\
&\quad
+\frac{R}{n-1}
\int_Mf|\nabla u|^2\,dV_g.
\label{thmB-eq5}
\end{align}
Taking into account that $c>0$, $u$ is nonconstant, $f>0$ in $int(M)$,
and $R\geq0$, the right-hand side of \eqref{thmB-eq5} is nonnegative. Thus, one concludes that
\[
p_1\geq\frac{H}{n-1}.
\]

We now analyze the equality case. Suppose that $p_1=\frac{H}{n-1}.$ Then \eqref{thmB-eq5} gives
\[
\int_Mf|\nabla^2u|^2\,dV_g
+
\frac{R}{n-1}
\int_Mf|\nabla u|^2\,dV_g
=0.
\]
Since both terms are nonnegative, $f>0$ in
$int(M)$ and $u$ is not constant, one sees that
\[
\nabla^2u=0
\qquad\text{and}\qquad
R=0.
\]
Notice also that $p_1>0$. Hence equality implies $H>0$, and then
$\kappa=cH>0$. Using that $\nabla^2u=0$, along $\partial M$ we have
\[
\nabla_{\partial M}^2u
=
\nabla^2u\big|_{T\partial M}
-
\frac{\partial u}{\partial\nu}\,\mathbb{II}.
\]
From \eqref{thmB-eq1}, (\ref{segform}), 
and the equality $p_1=\frac{H}{n-1},$ we obtain
\begin{equation}
\label{eqk1234fr}
\nabla_{\partial M}^2u
=
-p_1^2u\,g_{\partial M}.
\end{equation}
Therefore, by Obata's theorem, one deduces that $\partial M$ is isometric to $\mathbb{S}^{n-1}\left(1/p_1\right).$

It remains to show that $M^n$ itself is a Euclidean ball. Since equality implies
$\kappa>0$, we set 
$$
\lambda=\frac{f}{\kappa}.
$$ Whence, it follows that $\mathcal{L}_g^*(\lambda)=g$, so that $(M^n,\,g,\,\lambda)$ is a Miao--Tam
critical metric in the sense of \cite{miao2009volume,Batistaetal2017}. Taking into account that $\partial M$ is isometric to $\mathbb{S}^{n-1}\left(1/p_1\right),$ we have
\begin{equation}
\label{jku765mk01aabb}
    R^{\partial M}= (n-1)(n-2)p_{1}^2=\frac{(n-2)}{(n-1)}H^2.
\end{equation} Combining \eqref{jku765mk01aabb} and \eqref{eqnhu850001a}, and recalling
that $R=0$, we conclude that
$$
R_{nn}=0.
$$ Using again that $R=0$, we have $\mathring{Ric}=Ric$, and hence
$$
\mathring{Ric}(\nabla\lambda,\nabla\lambda)
=
|\nabla\lambda|^2R_{nn}
=
0.
$$
Therefore, Proposition 1 in \cite{Batistaetal2017} implies that
$(M^n,\,g)$ is isometric to the Euclidean ball $B^n\left(1/p_1\right).$

Conversely, for the Euclidean ball
$B^n(r)$, the first nonzero Steklov eigenvalue is $p_1=\frac{1}{r},$
while its boundary has mean curvature $H=\frac{n-1}{r},$ and hence, $p_1=\frac{H}{n-1}.$ So, the proof is concluded. 
\end{proof}

\subsection{Proof of Corollary \ref{corthmB}}
\begin{proof}
Since $R=0,$ (\ref{eqlap}) reduces to

\begin{equation}
\label{eqlapcorthmB}
    \Delta f = -\frac{\kappa n}{n-1}.
\end{equation} Taking into account that $f>0$ in $int(M)$ and $f=0$ on $\partial M,$ the maximum principle implies that $\kappa>0.$ Next, integrating (\ref{eqlapcorthmB}) over $M$ and using the Stokes' formula, we get

\begin{equation}
\label{eqKeyR0}
    Vol(M)=\frac{n-1}{n H} |\partial M|,
\end{equation} where we used that  $H=\frac{\kappa}{|\nabla f|}.$ Therefore, Theorem \ref{thmB} yields

\begin{equation*}
    p_1 \geq \frac{1}{n}\frac{|\partial M|}{Vol(M)},
\end{equation*} which proves the stated inequality. Moreover, equality in \eqref{ineqcorthmB} is equivalent to equality in Theorem~\ref{thmB}. 
\end{proof}

\subsection{Proof of Theorem \ref{thmD}}
\begin{proof}

Let $u$ be an eigenfunction associated with $q_1$ and set $$w=\Delta u.$$ Whence, it follows that $$\Delta w=0\,\,\,\hbox{in  }M\,\,\,\,\,\,\,\,\hbox{and}\,\,\,\,\,\,\,\,\,w=q_1 \frac{\partial u}{\partial \nu}\,\,\,\hbox{on   }\,\,\,\partial M.$$ Thus, using Green's identity for $u$ and $w,$ one obtains that

\begin{eqnarray}
\label{eqthmd231}
    \int_{M} w^2\,dV_g &=& \int_{M}\left(w\Delta u- u\Delta w\right)dV_g\nonumber\\&=& \int_{\partial M} w\frac{\partial u}{\partial \nu} dS_g\nonumber\\&=& q_1 \int_{\partial M}\left(\frac{\partial u}{\partial \nu}\right)^2 dS_g,
\end{eqnarray} where we also used that $u=0$ on $\partial M$ and $\Delta w=0$ in $M.$ 

At the same time, since $w=q_1 \frac{\partial u}{\partial \nu}$ on $\partial M,$ we see that

\begin{equation*}
    \int_{\partial M} w^2 dS_g=q_1^2 \int_{\partial M} \left(\frac{\partial u}{\partial \nu}\right)^2 dS_g. 
\end{equation*} This substituted into (\ref{eqthmd231}) yields

\begin{equation}
\label{eqkey1thmD}
    q_1 \int_{M}w^2 dV_g = \int_{\partial M}w^2 dS_g.
\end{equation}

On the other hand, applying Green's identity to  $f$ and $w^2,$ one obtains that

\begin{eqnarray*}
\int_{M}\left(f\Delta w^2-w^2\Delta f\right)dV_g &=& -\int_{\partial M} w^2 \frac{\partial f}{\partial \nu}\,dS_g\nonumber\\&=& c\int_{\partial M} w^2 \,dS_g,
\end{eqnarray*} where we have used (\ref{eqrepetc}). We now simplify the two interior terms. Indeed, since $w$ is harmonic, we get $\Delta w^2=2|\nabla w|^2,$ and using (\ref{eqlap}), one deduces that
\begin{equation}
\label{eqHHHTHM3e4}
    c\int_{\partial M}w^2 dS_g=2\int_{M} f|\nabla w|^2 dV_g + \frac{n\kappa}{n-1}\int_{M}w^2 dV_g +\frac{R}{n-1}\int_{M}fw^2\,dV_g.
\end{equation} Therefore, since $w\not\equiv 0,$ by (\ref{eqkey1thmD}), we have

\begin{equation*}
    q_1=\frac{n\kappa}{(n-1)c} +\frac{2}{c}\left(\frac{\displaystyle\int_M f|\nabla w|^2 dV_g}{\displaystyle\int_M w^2 dV_g}\right)+\frac{R}{(n-1)c}\left(\frac{\displaystyle\int_{M}fw^2\,dV_g}{\displaystyle\int_M w^2 dV_g}\right),
\end{equation*} which implies that 

\begin{equation}
\label{eqkey2thmD2}
    q_1\geq \frac{n}{n-1}H,
\end{equation} where we have used that $R\geq 0$ and $H=\kappa/c.$ This proves the stated inequality.

We next address the equality case. If equality holds in (\ref{eqkey2thmD2}), since $w\not\equiv 0$ and $f>0$ in $int(M),$ then $R=0$ and $w=d,$ where $d$ is a nonzero constant. From this, it follows that $$\Delta u=d\,\,\,\,\hbox{in}\,\,\,M\,\,\,\,\,\hbox{and}\,\,\,\,\,\,u=0\,\,\,\hbox{on}\,\,\,\partial M.$$ Now, using that $R=0$ and (\ref{eqlap}), one deduces that 

$$\Delta\left(\frac{u}{d}+\frac{(n-1)}{n\kappa}f\right)=0\,\,\,\,\hbox{in}\,\,\,M\,\,\,\,\,\hbox{and}\,\,\,\,\,\,\,\,\frac{u}{d}+\frac{(n-1)}{n\kappa}f=0\,\,\,\hbox{on}\,\,\,\partial M.$$ By maximum principle, we infer $$u=\frac{(n-1)d}{n\kappa}f=\alpha f,$$ as asserted. 

Conversely, if $R=0$ and $u=\alpha f,$ it follows from (\ref{eqlap}) that 

$$\Delta^2 f=0\,\,\,\,\hbox{in  }\,\,\,M\,\,\,\,\,\,\,\,\,\hbox{and}\,\,\,\,\,\,\,\,\,\Delta f=\frac{n\kappa}{(n-1)c}\frac{\partial f}{\partial \nu}\,\,\,\hbox{on  }\,\,\,\partial M.$$ Hence, $f$ is an eigenfunction of the fourth-order Steklov problem corresponding to the eigenvalue \begin{equation}
\label{eqJKey111}
    \frac{n\kappa}{(n-1)c}=\frac{n}{n-1}H.
\end{equation} By the definition of $q_1$ as the first eigenvalue, we infer

$$q_1\leq \frac{n}{n-1}H.$$ This combined with (\ref{eqkey2thmD2}) gives 

\begin{eqnarray}
\label{ek89011thmD}
    q_1=\frac{n}{n-1}H.
\end{eqnarray} So, the proof of Theorem \ref{thmD} is completed.
\end{proof}

\subsection{Proof of Corollary \ref{corthm3_a}}
\begin{proof}
As observed in the proof of Corollary \ref{corthmB}, since $R=0$, we have necessarily $\kappa>0.$ Thus, we may define $\lambda=\frac{f}{\kappa},$ so that
\[
\mathcal L_g^*(\lambda)=g.
\] Hence, it suffices to invoke Proposition 2.5 of Corvino--Eichmair--Miao
\cite{corv2013def} to infer
\[
\frac{|\partial M|}{Vol(M)}
\leq
\frac{n}{\sqrt{(n-1)(n-2)}}
\left(
\frac{1}{|\partial M|}
\int_{\partial M}R^{\partial M}\,dS_g
\right)^{1/2}.
\] Combining this inequality with \eqref{eq1athmD} and squaring both
sides, we obtain 
\[
q_1^2
\leq
\frac{n^2}{(n-1)(n-2)|\partial M|}
\int_{\partial M}R^{\partial M}\,dS_g,
\]
which proves \eqref{ineqThmD}. Finally, since (\ref{eq1athmD}) is an identity, equality in (\ref{ineqThmD}) is equivalent to equality in Proposition~2.5 of
\cite{corv2013def}. Then, equality holds if and only if $(M^n,g)$ is isometric to a Euclidean ball. 
\end{proof}

\subsection{Proof of Theorem \ref{ThmD_k}}

\begin{proof}
By Lemma~\ref{lem:Fichera}, the eigenvalues of the fourth-order
Steklov problem coincide, including multiplicities, with those of the
generalized eigenvalue problem
\begin{equation}
\label{eq:dual-qk}
\int_{\partial M} h\psi\,dS_g
=
q\int_M h\psi\,dV_g,
\qquad
\psi\in\mathcal H(M).
\end{equation}
Consequently, by the Courant--Fischer min--max principle
\cite[p.~12]{Henrot2006}, the $k$-th eigenvalue is characterized by
\begin{equation}
\label{eq:minmax-qk}
q_k
=
\inf_{\substack{E\subset\mathcal H(M)\\ \dim E=k}}
\sup_{0\neq h\in E}
\frac{\displaystyle\int_{\partial M}h^2\,dS_g}
{\displaystyle\int_M h^2\,dV_g}.
\end{equation}

We now use the $V$-static structure to rewrite the Rayleigh quotient
in \eqref{eq:minmax-qk}. To this end, we first consider a smooth harmonic function
$h$. As in the proof of \eqref{eqHHHTHM3e4}, applying Green's identity to
$f$ and $h^2$, and using $f=0$ on $\partial M$, \eqref{eqrepetc},
the harmonicity of $h$, and \eqref{eqlap}, we obtain

\begin{equation}
\label{eq:rayleigh-qk}
c\int_{\partial M}h^2\,dS_g
=
2\int_M f|\nabla h|^2\,dV_g
+
\frac{n\kappa}{n-1}
\int_Mh^2\,dV_g.
\end{equation}
Thus, for every nonzero harmonic function $h$, we have
\begin{equation}
\label{eq:rayleigh-shift-qk}
\frac{\displaystyle\int_{\partial M}h^2\,dS_g}
{\displaystyle\int_Mh^2\,dV_g}
=
\frac{n\kappa}{(n-1)c}
+
\frac{2}{c}\left(
\frac{\displaystyle\int_M f|\nabla h|^2\,dV_g}
{\displaystyle\int_Mh^2\,dV_g}\right).
\end{equation}
The same identity extends to $h\in\mathcal H(M)$ by density of smooth
harmonic functions in $\mathcal H(M)$ and continuity of the trace
operator.

Substituting \eqref{eq:rayleigh-shift-qk} into the min--max
characterization \eqref{eq:minmax-qk}, we get
\begin{align*}
q_k
&=
\inf_{\substack{E\subset\mathcal H(M)\\ \dim E=k}}
\sup_{0\neq h\in E}
\left(
\frac{n\kappa}{(n-1)c}
+
\frac{2}{c}
\frac{\displaystyle\int_M f|\nabla h|^2\,dV_g}
{\displaystyle\int_Mh^2\,dV_g}
\right)\\
&=
\frac{n\kappa}{(n-1)c}
+
\frac{2}{c}\left(
\inf_{\substack{E\subset\mathcal H(M)\\ \dim E=k}}
\sup_{0\neq h\in E}
\frac{\displaystyle\int_M f|\nabla h|^2\,dV_g}
{\displaystyle\int_Mh^2\,dV_g}\right).
\end{align*}
%Here we used that the first term is independent of both $h$ and $E$, and that $2/c>0$. 
Finally, it suffices to use (\ref{eqKeyR0}) and (\ref{eqJKey111}) to achieve $$q_k = \frac{|\partial M|}{Vol(M)} + \frac{2}{|\nabla f|_{\partial M}}\,\left(
\inf_{\substack{E\subset\mathcal H(M)\\ \dim E=k}}
\sup_{0\neq h\in E}
\frac{\displaystyle\int_M f|\nabla h|^2\,dV_g}
{\displaystyle\int_Mh^2\,dV_g}\right).$$ This finishes the proof of the theorem. 
\end{proof}

\subsection{Proof of Theorem \ref{thmC}}

\begin{proof}
Let $w$ be an eigenfunction associated with the first nonzero weighted
eigenvalue $\mu_1$, normalized so that
\[
\int_M wf\,dV_g=0.
\] Thus, we have 
\begin{equation}\label{thmC-eigen}
L_f w=-\mu_1w
\qquad \text{in }\,\, int(M),
\end{equation}
or, equivalently,
\begin{equation}\label{thmC-div}
\operatorname{div}(f\nabla w)=-\mu_1fw.
\end{equation} Since $f=0$ on $\partial M$, the operator
$L_f$ contains singular terms of the form $1/f$ near the
boundary. For this reason, for $\varepsilon>0$ sufficiently small, we
consider the  domains
\[
M_\varepsilon
=
\{x\in M:f(x)\geq\varepsilon\}.
\]
It turns out that $|\nabla f|=c>0$ on $\partial M$ and hence, by continuity, there exists $\varepsilon_0>0$ such that
$|\nabla f|>0$ on $\{0\leq f\leq\varepsilon_0\}.$ Thereby, every
$\varepsilon\in(0,\varepsilon_0)$ is a regular value of $f$, and
$\partial M_\varepsilon=\{f=\varepsilon\}$ is smooth.

We now use Lemma \ref{lemB}, on $M_\varepsilon,$ and \eqref{thmC-eigen} in order to infer

\begin{equation}
\label{thmC-eq1}
\frac{1}{2}L_f |\nabla w|^2
\geq 
\frac{\mu_1^2}{n+1}w^2
+
\left(
\frac{R}{n-1}-\mu_1
\right)|\nabla w|^2 +
\frac{\kappa}{(n-1)f}|\nabla w|^2.
\end{equation} Multiplying \eqref{thmC-eq1} by $f$ and integrating over
$M_\varepsilon$, we get
\begin{align}
\frac{1}{2}
\int_{M_\varepsilon}
f\,L_f |\nabla w|^2\,dV_g
\geq{}&
\frac{\mu_1^2}{n+1}
\int_{M_\varepsilon}fw^2\,dV_g
\nonumber\\
&+
\left(
\frac{R}{n-1}-\mu_1
\right)
\int_{M_\varepsilon}f|\nabla w|^2\,dV_g
\nonumber\\
&+
\frac{\kappa}{n-1}
\int_{M_\varepsilon}|\nabla w|^2\,dV_g.
\label{thmC-eq2}
\end{align}

 On the other hand, since $f\,L_f \psi
=
\operatorname{div}(f\nabla\psi),$ the divergence theorem gives
\[
\int_{M_\varepsilon}
f\,L_f |\nabla w|^2\,dV_g
=
\int_{\partial M_\varepsilon}
f\frac{\partial |\nabla w|^2}
{\partial\nu_\varepsilon}\,dS_g,
\]
where $\nu_\varepsilon$ denotes the outward unit normal to
$\partial M_\varepsilon$. But, $f=\varepsilon$ on
$\partial M_\varepsilon$ and hence, 
\begin{equation}\label{thmC-boundary1}
\int_{M_\varepsilon}
f\, L_f|\nabla w|^2\,dV_g
=
\varepsilon
\int_{\partial M_\varepsilon}
\frac{\partial |\nabla w|^2}
{\partial\nu_\varepsilon}\,dS_g.
\end{equation}

We now justify the limit of the boundary term. Taking into account that $|\nabla f|>0$ in a neighborhood of $\partial M,$ the level
hypersurfaces $\partial M_\varepsilon$ form a smooth family for all sufficiently small $\varepsilon>0,$ and their $(n-1)$-dimensional
volumes remain uniformly bounded as $\varepsilon\to 0$. Moreover, by
the regularity of $w$, one sees that
\[
\frac{\partial |\nabla w|^2}{\partial\nu_\varepsilon}
=
2\nabla^2w(\nabla w,\nu_\varepsilon)
\]
is uniformly bounded in this neighborhood. Consequently,
\begin{equation}\label{thmC-boundary-limit1}
\varepsilon
\int_{\partial M_\varepsilon}
\frac{\partial |\nabla w|^2}
{\partial\nu_\varepsilon}\,dS_g
\longrightarrow0
\qquad\text{as }\varepsilon\to0.
\end{equation}

Now, taking $\varepsilon\to0$ in \eqref{thmC-eq2}, using
\eqref{thmC-boundary1}, \eqref{thmC-boundary-limit1}, and the dominated
convergence theorem, we obtain
\begin{align}
0\geq{}&
\frac{\mu_1^2}{n+1}
\int_Mfw^2\,dV_g
+
\left(
\frac{R}{n-1}-\mu_1
\right)
\int_Mf|\nabla w|^2\,dV_g
\nonumber\\
&+
\frac{\kappa}{n-1}
\int_M|\nabla w|^2\,dV_g.
\label{thmC-eq3}
\end{align}

At the same time, multiplying \eqref{thmC-div} by $w$ and integrating over
$M_\varepsilon$, we achieve
\begin{equation}\label{thmC-eq4}
\mu_1\int_{M_\varepsilon}fw^2\,dV_g
=
\int_{M_\varepsilon}f|\nabla w|^2\,dV_g
-
\varepsilon
\int_{\partial M_\varepsilon}
w\frac{\partial w}{\partial\nu_\varepsilon}\,dS_g.
\end{equation} As above, the regularity of $w$ and the uniform boundedness of the
areas of $\partial M_\varepsilon$ imply
\begin{equation*}
\varepsilon
\int_{\partial M_\varepsilon}
w\frac{\partial w}{\partial\nu_\varepsilon}\,dS_g
\longrightarrow0
\qquad\text{as }\varepsilon\to0.
\end{equation*} Letting $\varepsilon\to0$ in \eqref{thmC-eq4}, we conclude that
\begin{equation}\label{thmC-energy}
\int_Mf|\nabla w|^2\,dV_g
=
\mu_1\int_Mfw^2\,dV_g.
\end{equation} Substituting \eqref{thmC-energy} into \eqref{thmC-eq3}, we therefore obtain
\begin{align}
0\geq{}&
\mu_1
\left(
\frac{R}{n-1}
-
\frac{n}{n+1}\mu_1
\right)
\int_Mfw^2\,dV_g
\nonumber\\
&+
\frac{\kappa}{n-1}
\int_M|\nabla w|^2\,dV_g.
\label{thmC-eq5}
\end{align} Besides, by \eqref{thmC-energy}, one sees that
\begin{equation}\label{thmC-eq6}
\int_M|\nabla w|^2\,dV_g
\geq
\frac{\mu_1}{f_{\max}}
\int_Mfw^2\,dV_g.
\end{equation} Since $\kappa\geq0$, \eqref{thmC-eq5} and
\eqref{thmC-eq6} yield
\[
0
\geq
\mu_1
\left(
\frac{R}{n-1}
-
\frac{n}{n+1}\mu_1
+
\frac{\kappa}{(n-1)f_{\max}}
\right)
\int_Mfw^2\,dV_g.
\] Whence, it follows that
\begin{equation}\label{thmC-final}
\mu_1
\geq
\frac{n+1}{n(n-1)}
\left(
R+\frac{\kappa}{f_{\max}}
\right),
\end{equation} which proves the stated inequality.

We now analyze the equality case. Suppose equality holds in
\eqref{thmC-final}. We first claim that $\kappa=0$. Indeed, if
$\kappa>0$, then equality must also hold in \eqref{thmC-eq6}. Thus,
\[
\int_M
\left(
1-\frac{f}{f_{\max}}
\right)
|\nabla w|^2\,dV_g
=
0.
\] Since $f$ is not constant, one deduces that $\nabla w=0.$ Hence, by (\ref{thmC-eq6}), one sees that $w\equiv0$ on $M$,
which contradicts the choice of $w.$ Therefore, $\kappa=0$ as claimed. 

Thereby, in the equality case, we have
\begin{equation}\label{thmC-equality-mu}
\mu_1
=
\frac{n+1}{n(n-1)}R.
\end{equation}
Moreover, equality must hold in Lemma \ref{lemB}.
Consequently,
\begin{equation}\label{thmC-hessian1}
\nabla^2w
=
\frac{\Delta w}{n}g
=
\frac{\langle\nabla f,\nabla w\rangle}{f}\,g.
\end{equation}
It follows from \eqref{thmC-hessian1} that
\[
L_f w
=
\Delta w
+
\frac{\langle\nabla f,\nabla w\rangle}{f}
=
\frac{n+1}{n}\Delta w,
\] and using \eqref{thmC-eigen}, we obtain
\[
\Delta w
=
-\frac{n\mu_1}{n+1}w.
\] This substituted into \eqref{thmC-hessian1} gives
\begin{equation}\label{thmC-hessian2}
\nabla^2w
=
-\frac{\mu_1}{n+1}wg.
\end{equation}
Furthermore, we have 
\[
\frac{\langle\nabla f,\nabla w\rangle}{f}
=
-\frac{\mu_1}{n+1}w,
\]
and therefore
\begin{equation}\label{thmC-f-w}
\langle\nabla f,\nabla w\rangle
=
-\frac{\mu_1}{n+1}fw.
\end{equation} Since $f=0$ on $\partial M$, we get
\[
\langle\nabla f,\nabla w\rangle=0
\qquad\text{on }\partial M.
\]

On the other hand, it is known that $\nabla f=-c\nu$ on $\partial M,$
where $c=|\nabla f|_{\partial M}>0$. Hence,
\begin{equation}\label{thmC-neumann}
\frac{\partial w}{\partial\nu}=0
\qquad\text{on }\partial M.
\end{equation} Combining \eqref{thmC-hessian2} and \eqref{thmC-neumann}, we obtain
\[
\begin{cases}
\displaystyle
\nabla^2w
+
\frac{\mu_1}{n+1}wg=0
& \text{in }M,\\[2mm]
\displaystyle
\frac{\partial w}{\partial\nu}=0
& \text{on }\partial M.
\end{cases}
\]
Since $w$ is nonconstant, the boundary version of Obata's rigidity
theorem (see \cite{Reilly2}) implies that $(M^n,\,g)$ is isometric to a hemisphere of constant
sectional curvature $K=\frac{\mu_1}{n+1}.$ Whence, it follows from \eqref{thmC-equality-mu} that $(M^n,\,g)$ is isometric to the hemisphere
$\mathbb{S}^n_+
\left(
\sqrt{\frac{n(n-1)}{R}}
\right).$

Conversely, let $(M^n,g)$ be a hemisphere of constant sectional
curvature $K=\frac{R}{n(n-1)}$ with its standard static potential $f$. Then $\kappa=0$. In particular, the coordinate
functions tangent to the equator satisfy
\[
L_f w=-(n+1)K\,w.
\]
Therefore,
\[
\mu_1=(n+1)K
=
\frac{n+1}{n(n-1)}R,
\]
and equality holds in \eqref{thmC-final}. This completes the proof.
\end{proof}

%\begin{remark}
%We emphasize that no Lojasiewicz-type inequality is required in the
%above limiting argument. Indeed, since $|\nabla f|>0$ on $\partial M$,
%the level sets $\{f=\varepsilon\}$ form a smooth family for all
%sufficiently small $\varepsilon>0$, with uniformly bounded area.
%Moreover, $f=\varepsilon$ on $\partial M_\varepsilon$, so the boundary
%terms arising from integration by parts vanish as $\varepsilon\to0$,
%provided the eigenfunction has the required boundary regularity. This
%differs from arguments involving level sets approaching a critical
%level of the potential, where quantitative Lojasiewicz-type estimates
%may be necessary.
%\end{remark}

\subsection{Proof of Corollary \ref{cor-static-weighted}}
\begin{proof}
Since $(M^n,g,f)$ is a static space, we have $\kappa=0.$ Hence, by Proposition
\ref{pthmK1}, we have $\lambda_1=\frac{R}{n-1}.$ Thereby, Theorem \ref{thmC} implies
\[
\mu_1
\geq
\frac{n+1}{n(n-1)}R
=
\frac{n+1}{n}\lambda_1.
\]
Moreover, the equality characterization follows from Theorem \ref{thmC}.
\end{proof}

\end{document}